\documentclass[11pt]{article}

\usepackage[margin=1in]{geometry}
\usepackage{amsmath,amssymb,amsthm}
\usepackage{graphicx}
\usepackage{hyperref}
\usepackage{booktabs}
\usepackage{cite}
\usepackage{algorithm}
\usepackage{algpseudocode}
\usepackage{authblk}

\hypersetup{colorlinks=true, linkcolor=blue, citecolor=blue, urlcolor=blue}

\newtheorem{theorem}{Theorem}[section]
\newtheorem{lemma}[theorem]{Lemma}
\newtheorem{proposition}[theorem]{Proposition}
\newtheorem{corollary}[theorem]{Corollary}

\theoremstyle{definition}

\newtheorem{remark}[theorem]{Remark}
\newtheorem{example}[theorem]{Example}

\newcommand{\R}{\mathbb{R}}
\newcommand{\C}{\mathbb{C}}

\newcommand{\Z}{\mathbb{Z}}
\newcommand{\A}{\mathcal{A}}

\newcommand{\F}{\mathcal{F}}
\newcommand{\tr}{\operatorname{tr}}

\title{Sharp Spectral Bounds for Symmetric Positive Definite Tensors\\via Multiple Algebraic Invariants}

\author[1]{Hemant Sharma\thanks{ \texttt{sharmahemant39@gmail.com}}}
\author[2]{Snigdhashree Nayak\thanks{ \texttt{snigdhashreenayak91@gmail.com}}}
\author[3]{Ankit Singh\thanks{Corresponding author.as92393@gmail.com}}

\affil[1]{Department of Mathematics, Indian Institute of Information Technology, Design and Manufacturing Kancheepuram-600127, India}
\affil[2]{Department of Mathematics, Ravenshaw university,  Odisha-753003, India}
\affil[3]{Department of Applied Sciences and Humanities, SVKM's College of Engineering, Shirpur-425405, India}

\date{}

\begin{document}
\maketitle
 
\begin{abstract}
We extend the trace--determinant framework of Nayak, Sharma, and Mishra~\cite{nayak2026} for bounding the H-eigenvalues of symmetric positive definite tensors. First, we replace the Arithmetic--Geometric Mean (AM--GM) relaxation underlying previous bounds by the exact solution of the associated constrained optimization problem, yielding sharp upper and lower bounds that are attained on the admissible spectral variety. Second, we incorporate higher-order power sums as additional spectral invariants and prove a structural theorem showing that any extremizer over a $K$-invariant feasibility region has at most $K$ distinct spectral values. This reduces the problem to a finite collection of low-dimensional polynomial systems and yields a hierarchy of increasingly tight bounds. For the four-invariant case $(T,S,p_3,D)$, we develop a complete theory including solution-count estimates, a multistart Newton algorithm, and sharpness conditions. We also derive closed-form bounds in small dimensions, establish perturbation estimates, and obtain refined Lyapunov region-of-attraction bounds. Numerical experiments for dimensions up to $d=100$ show that the sharp three-invariant bound reduces the median relative overestimation gap from $53\%$ to $6\%$ while maintaining low computational cost. The framework is validated on tensors with real H-spectrum.

\end{abstract}
 
\noindent\textbf{Keywords:} Eigenvalue bounds, symmetric tensors, power sums, Lagrangian extremization, Lyapunov stability, region of attraction.\vspace{0.4cm}
 
\noindent\textbf{MSC2020:} 15A18, 15A69, 15A42, 65F15, 93D05.
 
\section{Introduction}\label{sec:intro}
 
The H-eigenvalue problem for a real $m$th order $n$-dimensional symmetric tensor $\A \in \R^{[n,m]}$ is the nonlinear system $\A x^{m-1} = \lambda x^{[m-1]}$, with $x^{[m-1]} := (x_1^{m-1}, \ldots, x_n^{m-1})^T$. Introduced independently by Lim~\cite{lim2005} and Qi~\cite{qi2005}, this problem now plays a central role in higher-order data analysis~\cite{golub2001}, automatic control~\cite{ni2008}, diffusion tensor imaging~\cite{qi2010}, spectral hypergraph theory~\cite{cooper2012}, and quantum entanglement detection~\cite{ni2014}.
 
In a recent contribution, Nayak, Sharma, and Mishra~\cite{nayak2026} introduced an algebraic framework for bounding the spectral radius and the smallest eigenvalue of a symmetric positive definite tensor by leveraging two intrinsic invariants, the trace $\tr(\A)$ and the determinant $\det(\A)$ (defined via the resultant of $\A x^{m-1} = 0$). Through repeated application of the AM--GM inequality, they derived a hierarchy of bounds (their Theorems 3.1--3.6) that strictly dominates the classical Gershgorin circle bounds, particularly when the tensor has negative off-diagonal entries or when the order $m$ is large.
 
\paragraph{Two limitations.} Two structural limitations of that work motivate the present article.
 
\begin{enumerate}
\item[(L1)] \emph{AM--GM is a relaxation.} The AM--GM inequality replaces an exact constrained extremization by a tractable but loose surrogate. The actual optimization problem
\[
\max\bigl\{\lambda_1 : \textstyle\sum \lambda_i = T,\ \prod \lambda_i = D,\ \lambda_i > 0\bigr\}
\]
admits a closed-form solution via Lagrange multipliers. The bound thus obtained is sharp on the spectral variety, while the AM--GM bound can be strictly slacker.
 
\item[(L2)] \emph{Only two invariants are used.} The trace and determinant are merely two coordinates of an infinite family of algebraic invariants of $\A$, namely the power sums $p_k(\A) = \sum_{i=1}^d \lambda_i^k$, $k=1, 2, \ldots$, each of which is an absolute invariant of $\A$ under index permutations. Including additional power sums tightens the feasibility region by a corresponding number of algebraic dimensions.
\end{enumerate}
 
\paragraph{Contributions.} This paper makes the following contributions.
\begin{enumerate}
    \item \emph{Sharp two-invariant bound (\S\ref{sec:two-inv}).} We derive sharp upper and lower bounds on the H-spectrum (Theorem~\ref{thm:sharp2}) as roots of an explicit single-variable polynomial. This bound dominates every theorem of~\cite{nayak2026} as a corollary and recovers the classical Merikoski--Virtanen~\cite{merikoski1997} and Wolkowicz--Styan~\cite{wolkowicz1980} matrix bounds in the case $m = 2$.
    
    \item \emph{Sharp three-invariant bound (\S\ref{sec:three-inv}).} We prove a structural theorem (Theorem~\ref{thm:three-cluster}) showing that at any extremum, the spectrum is supported on at most three distinct values. This reduces the optimization to a finite union of polynomial systems solvable by bisection.
    
    \item \emph{General $K$-invariant hierarchy (\S\ref{sec:K-inv}).} We generalize the structural theorem to arbitrary $K \geq 2$: any extremal spectrum has at most $K$ distinct values (Theorem~\ref{thm:K-cluster}), where $K$ counts the total number of invariants used ($K - 1$ power sums plus the determinant). This yields a monotonically tightening hierarchy $B_2^+ \geq B_3^+ \geq B_4^+ \geq \cdots \to \lambda_{\max}$.
    
    \item \emph{Rigorous treatment of the four-invariant case (\S\ref{sec:K4}).} We develop the four-invariant case ($T, S, p_3, D$) in full detail: a Bézout-type count on the number of complex solutions to the polynomial system (Proposition~\ref{prop:K4-bezout}), a multistart-Newton algorithm with explicit complexity (Algorithm~\ref{alg:B4plus}), and a sharpness theorem (Theorem~\ref{thm:K4-sharpness}) showing that $B_4^+$ is exactly attained iff the spectrum has at most four distinct values. Numerical verification: $B_4^+ = 1.10000$ exactly recovers $\lambda_{\max}$ on Example 4.1 of~\cite{nayak2026}.
    
    \item \emph{Closed-form bounds for small dimensions (\S\ref{sec:closed-form}).} We provide explicit bound formulas for $d = 2, 3, 4$ via Cardano's and Ferrari's methods, recovering classical matrix results.
    
    \item \emph{Perturbation analysis (\S\ref{sec:perturbation}).} We quantify the sensitivity of the bounds to errors in the invariants, finding that $B_3^+$ is most sensitive to errors in $T$, mildly sensitive to $S$, and remarkably insensitive to $D$, a fortunate property since $D$ is the most expensive invariant to compute.
    
    \item \emph{Random tensor experiments (\S\ref{sec:numerics}).} On 100 random spectra of length $d = 6$, the sharp three-invariant bound reduces the median relative overestimation gap from 53\% (AM--GM) to 6\%; a scaling study to $d = 100$ with runtime measurements documents that the relative advantage and sub-second cost persist for large tensors.
    
    \item \emph{Genuine-tensor validation and scope (\S\ref{sec:genuine}).} We make the entries-to-bound pipeline explicit on a non-diagonal $6\times 6$ matrix and a fourth-order tensor on $27$ H-eigenvalues, and we delimit the precise class of tensors, those with real H-spectrum to which the framework applies.
    
    \item \emph{Quantitative Lyapunov ROA estimates (\S\ref{sec:lyapunov}).} For a polynomial Lyapunov candidate, the new bounds yield region-of-attraction estimates that are 2--3 times larger than those obtainable from AM--GM bounds.
\end{enumerate}
 
\paragraph{Organization.} Section~\ref{sec:prelim} fixes notation and recalls relevant facts. Sections~\ref{sec:two-inv}--\ref{sec:K-inv} develop the bound theory: two invariants, three invariants, and the general $K$-invariant case. Section~\ref{sec:K4} treats the four-invariant case in full rigor, including Bézout solution counts, the multistart algorithm, and sharpness theorems. Section~\ref{sec:closed-form} treats small-dimension closed forms. Section~\ref{sec:classical} situates our bounds relative to classical matrix inequalities. Section~\ref{sec:computability} addresses the practical computation of the higher-order invariants. Section~\ref{sec:perturbation} treats robustness. Section~\ref{sec:numerics} presents numerical experiments, including a scaling study to $d = 100$ and runtime measurements. Section~\ref{sec:genuine} gives worked examples on genuine tensors and delimits the scope of applicability. Section~\ref{sec:lyapunov} discusses the application to Lyapunov stability. We conclude in Section~\ref{sec:conclusion}.
 
\section{Preliminaries}\label{sec:prelim}
 
We follow the notation of~\cite{nayak2026, qi2005}. A real $m$th order $n$-dimensional tensor $\A = (a_{i_1 i_2 \cdots i_m})$ with $1 \leq i_j \leq n$ is \emph{symmetric} if its entries are invariant under any permutation of indices. The set of all such tensors is denoted $\R^{[n,m]}$. For $x \in \R^n$, $\A x^{m-1}$ is the vector with $i$th component
\[
\bigl(\A x^{m-1}\bigr)_i = \sum_{i_2, \ldots, i_m = 1}^n a_{i\,i_2 \cdots i_m}\, x_{i_2} \cdots x_{i_m}.
\]
A scalar $\lambda \in \R$ is an \emph{H-eigenvalue} of $\A$ if there exists $x \in \R^n \setminus \{0\}$ with $\A x^{m-1} = \lambda x^{[m-1]}$.
 
\subsection{Algebraic invariants of the spectrum}
 
Throughout, $\A \in \R^{[n,m]}$ is symmetric and positive definite (so $m$ is even by~\cite[Remark 1]{nayak2026}). Let
\[
d := n(m-1)^{n-1}
\]
denote the total number of H-eigenvalues counted with algebraic multiplicity, and let $\lambda_1 \geq \lambda_2 \geq \cdots \geq \lambda_d > 0$ be the eigenvalues. For each $k \geq 1$ define the $k$th power sum
\[
p_k(\A) := \sum_{i=1}^d \lambda_i^k,
\]
and write $T := p_1(\A)$, $S := p_2(\A)$ for the most useful cases. The determinant $D := \det(\A) = \prod_{i=1}^d \lambda_i$ is the $d$th elementary symmetric polynomial in the eigenvalues.
 
By Qi~\cite{qi2005} and Hu--Huang--Ling--Qi~\cite{hu2013},
\begin{align}
T &= (m-1)^{n-1} \tr(\A), \label{eq:T}\\
D &= \det(\A) \quad\text{(via the resultant of }\A x^{m-1} = 0\text{)}, \label{eq:D}
\end{align}
and each $p_k(\A)$ is computable as a polynomial in the entries of $\A$ via the $k$th-order generalized trace formula~\cite[Theorem 4.1]{hu2013}. We discuss the practical computation of $S$ and higher $p_k$ in Section~\ref{sec:computability}.
 
\begin{remark}\label{rem:newton}
By Newton's identities, the data $\{p_1, p_2, \ldots, p_K, D\}$ is equivalent to the data $\{e_1, e_2, \ldots, e_K, e_d\}$, where $e_k$ is the $k$th elementary symmetric polynomial in the eigenvalues. The choice of representation is computational, not mathematical.
\end{remark}
 
\subsection{Spectral feasibility regions}
 
Given values of the invariants, define for each $K \geq 2$ the \emph{$K$-invariant feasibility region}
\begin{equation}\label{eq:FK}
\F_K(T_1, \ldots, T_{K-1}; D) := \biggl\{\lambda \in \R^d_{> 0} : \sum_{i=1}^d \lambda_i^k = T_k \text{ for } k = 1, \ldots, K-1,\ \prod_{i=1}^d \lambda_i = D\biggr\},
\end{equation}
which uses $K - 1$ power-sum constraints together with the product constraint, for a total of $K$ invariants. For $K = 2$, this is the classical two-invariant region $\F_2(T; D)$ from~\cite{nayak2026} (one power sum plus determinant). For $K = 3$, $\F_3(T, S; D)$ adds the second power sum $S$. For $K = 4$, $\F_4(T, S, p_3; D)$ further adds the third power sum. The corresponding bound problems are
\[
B_K^+ := \max\{\lambda_1 : \lambda \in \F_K\}, \qquad B_K^- := \min\{\lambda_d : \lambda \in \F_K\}.
\]
Trivially $B_2^+ \geq B_3^+ \geq B_4^+ \geq \cdots \geq \lambda_{\max}$ and dually for the lower bounds. The crux of this paper is to compute these in closed (or near-closed) form.

The structural theorems below apply the Karush--Kuhn--Tucker conditions at a maximizer of $\lambda_1$ over $\F_K$. The following lemma guarantees that such a maximizer exists and lies in the interior of the positive orthant, so that no boundary (inequality) multipliers arise and the stationarity conditions take the clean form used throughout.

\begin{lemma}[Existence and interiority]\label{lem:interior}
Let $K \geq 2$ and let the invariants $(T_1, \ldots, T_{K-1}; D)$ with $T_1 = T > 0$ and $D > 0$ be such that $\F_K$ is nonempty. Then every $\lambda \in \F_K$ satisfies
\begin{equation}\label{eq:coord-bounds}
\frac{D}{T^{d-1}} \;\leq\; \lambda_i \;\leq\; T, \qquad i = 1, \ldots, d,
\end{equation}
so $\F_K$ is a compact subset of the open positive orthant $\R^d_{>0}$. Consequently the maximum $B_K^+ = \max\{\lambda_1 : \lambda \in \F_K\}$ is attained, and every maximizer lies in the interior of $\R^d_{>0}$.
\end{lemma}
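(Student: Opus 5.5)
We prove the coordinate bounds first and then derive compactness, attainment and interiority from them. Fix $\lambda \in \F_K$. All coordinates are positive and $\sum_j \lambda_j = T$, so each $\lambda_i \le T$; indeed $\lambda_i = T - \sum_{j\ne i}\lambda_j < T$.

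For the lower bound, isolate $\lambda_i$ in the product constraint:
\[
D = \lambda_i \prod_{j\ne i}\lambda_j \le \lambda_i\, T^{d-1},
\]
which gives $\lambda_i \ge D/T^{d-1}$. (AM--GM on the remaining $d-1$ coordinates would give the sharper bound $\prod_{j\ne i}\lambda_j \le (T/(d-1))^{d-1}$, but the crude bound $\lambda_j\le T$ already yields the stated inequality.) Since $D>0$ and $T>0$, the lower bound $c:=D/T^{d-1}$ is strictly positive. Hence
\[
\F_K \subseteq [c,T]^d \subset \R^d_{>0}.
\]

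Next, $\F_K$ is the intersection of $\R^d_{>0}$ with the zero set of the continuous (polynomial) maps $\lambda\mapsto \sum_i\lambda_i^k - T_k$ and $\lambda\mapsto\prod_i\lambda_i - D$. Because $\F_K\subseteq[c,T]^d$, we may also write $\F_K = [c,T]^d \cap \{\text{polynomial equations}\}$. This is a closed subset of a compact box, so it is compact.

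The objective $\lambda\mapsto\lambda_1$ is continuous and $\F_K$ is nonempty by hypothesis, so the Weierstrass theorem shows the maximum $B_K^+$ is attained. Every maximizer lies in $\F_K\subseteq[c,T]^d$, so all of its coordinates are at least $c>0$ and it sits in the open orthant. No positivity constraint is active there, so the KKT system reduces to Lagrange stationarity for the equality constraints alone.

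The one subtle point is closedness. $\R^d_{>0}$ is not closed, so compactness cannot be read off directly from the definition. It follows only after the strictly positive lower bound $c$ has been established, which is why the product constraint with $D>0$ is essential to the argument.
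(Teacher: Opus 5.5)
Your proof is correct and follows essentially the same route as the paper: the sum constraint gives $\lambda_i \le T$, the product constraint together with the crude bound $\lambda_j \le T$ gives $\lambda_i \ge D/T^{d-1} > 0$, and then $\F_K$ is the intersection of the compact box $[D/T^{d-1},T]^d$ with closed level sets, so Weierstrass gives attainment and the positive lower bound gives interiority. Your closing KKT remark goes beyond what the lemma claims. If you use it downstream, the existence of Lagrange multipliers still needs a constraint qualification or a Fritz John argument, but the lemma itself is fully established.
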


\begin{proof}
The constraint set always includes the first power sum $\sum_i \lambda_i = T$ (the case $k = 1$ in~\eqref{eq:FK}). With $\lambda_i > 0$ this gives the upper bound $\lambda_i \leq T$ for every $i$. For the lower bound, the product constraint $\prod_i \lambda_i = D$ yields, for each fixed $j$,
\[
\lambda_j = \frac{D}{\prod_{i \neq j} \lambda_i} \;\geq\; \frac{D}{T^{\,d-1}},
\]
using $\lambda_i \leq T$ for the $d-1$ factors in the denominator. This establishes~\eqref{eq:coord-bounds}, so $\F_K \subseteq [\,D/T^{d-1},\, T\,]^d$, a compact box contained in $\R^d_{>0}$. The set $\F_K$ is the intersection of this box with the preimages of $\{T_k\}$ and $\{D\}$ under the continuous maps $\lambda \mapsto p_k(\lambda)$ and $\lambda \mapsto \prod_i \lambda_i$; hence $\F_K$ is closed and bounded, i.e.\ compact. Since $\lambda_1$ is continuous, its maximum over the nonempty compact set $\F_K$ is attained. Finally, because every coordinate is bounded below by $D/T^{d-1} > 0$, the set $\F_K$ does not meet the boundary $\{\lambda : \min_i \lambda_i = 0\}$ of the orthant, so every maximizer is interior.
\end{proof}
 
As more invariants are imposed, the admissible spectrum is squeezed into a progressively smaller envelope, and the corresponding bound $B_K^+$ converges towards the true $\lambda_{\max}$.

\section{The sharp two-invariant bound}\label{sec:two-inv}
 
\begin{theorem}\label{thm:sharp2}
Let $\A \in \R^{[n,m]}$ be symmetric positive definite, with $T$ and $D$ as in~\eqref{eq:T}--\eqref{eq:D}. Define the polynomial
\begin{equation}\label{eq:phi-poly}
\phi_{T,D}(\Lambda) := \Lambda (T - \Lambda)^{d-1} - D (d-1)^{d-1}.
\end{equation}
Then $\phi_{T,D}$ has either a unique double root at $\Lambda = T/d$ (the equispectrum case, $D = (T/d)^d$) or exactly two simple real roots $0 < \Lambda^- < T/d < \Lambda^+ < T$ in $(0, T)$. In the latter case,
\begin{equation}\label{eq:sharp2-bounds}
B_2^+(T, D) = \Lambda^+, \qquad B_2^-(T, D) = \Lambda^-.
\end{equation}
That is, $\Lambda^+$ is the sharp upper bound on $\lambda_{\max}$ and $\Lambda^-$ is the sharp lower bound on $\lambda_{\min}$. Both bounds are attained on the two-cluster spectrum $(\Lambda^+, c, \ldots, c)$ and $(c', \ldots, c', \Lambda^-)$, where $c = (T - \Lambda^+)/(d-1)$ and $c' = (T - \Lambda^-)/(d-1)$.
\end{theorem}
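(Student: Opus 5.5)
The plan is to reduce everything to the one-variable function $g(\Lambda) := \Lambda (T-\Lambda)^{d-1}$ on $[0,T]$, so that $\phi_{T,D}(\Lambda) = g(\Lambda) - D(d-1)^{d-1}$, and then to combine an elementary study of $g$ with AM--GM applied to the \emph{remaining} $d-1$ eigenvalues. The point is that AM--GM used in this conditional way, with one coordinate held fixed, loses nothing. This is exactly the exact Lagrangian solution alluded to in (L1), and it needs no multipliers at all.

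\textbf{Root structure.} First, $g(0)=g(T)=0$ and
\[
g'(\Lambda) = (T-\Lambda)^{d-2}\bigl(T - d\Lambda\bigr),
\]
so $g$ is strictly increasing on $(0,T/d)$ and strictly decreasing on $(T/d,T)$. Its maximum is
\[
g(T/d) = (T/d)^d (d-1)^{d-1}.
\]
Every spectrum in $\F_2(T;D)$ consists of positive reals with sum $T$, so AM--GM gives $D \le (T/d)^d$; hence $\phi_{T,D}(T/d) \ge 0$. At the endpoints, $\phi_{T,D}(0) = \phi_{T,D}(T) = -D(d-1)^{d-1} < 0$.
\begin{itemize}
\item If $D = (T/d)^d$ (equality in AM--GM, i.e.\ the equispectrum), then $T/d$ is a root with $\phi' = g' = 0$ there, so it is a double root and the only root in $(0,T)$.
\item Otherwise $\phi_{T,D}(T/d) > 0$, and strict monotonicity on each side gives exactly two simple roots $\Lambda^- \in (0,T/d)$ and $\Lambda^+ \in (T/d,T)$. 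They are simple because $g' \neq 0$ away from $T/d$ in $(0,T)$.
\end{itemize}
This settles the dichotomy in the statement, restricted to $(0,T)$ as stated. Roots outside $(0,T)$ are irrelevant because Lemma~\ref{lem:interior} confines all coordinates to $(0,T]$.

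\textbf{Upper and lower bounds.} Take any $\lambda \in \F_2(T;D)$ and any index $j$, and set $\Lambda = \lambda_j$. The other $d-1$ entries are positive with sum $T-\Lambda$, so AM--GM gives
\[
\prod_{i\neq j}\lambda_i \le \left(\frac{T-\Lambda}{d-1}\right)^{d-1}.
\]
Multiplying by $\Lambda$ yields $D \le g(\Lambda)/(d-1)^{d-1}$, that is, $\phi_{T,D}(\Lambda) \ge 0$. By the root structure just established, the set $\{\Lambda \in (0,T) : \phi_{T,D}(\Lambda) \ge 0\}$ is exactly $[\Lambda^-,\Lambda^+]$. Therefore every eigenvalue lies in $[\Lambda^-,\Lambda^+]$; in particular $\lambda_{\max} \le \Lambda^+$ and $\lambda_{\min} \ge \Lambda^-$. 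This gives $B_2^+ \le \Lambda^+$ and $B_2^- \ge \Lambda^-$.

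\textbf{Attainment.} To get equality, I check that $(\Lambda^+, c, \dots, c)$ lies in $\F_2$.
\begin{itemize}
\item Its sum is $T$ by the choice of $c$.
\item Its product is $\Lambda^+ c^{d-1} = g(\Lambda^+)/(d-1)^{d-1} = D$, precisely because $\phi_{T,D}(\Lambda^+) = 0$.
\item Its entries are positive, since $\Lambda^+ < T$.
\item $\Lambda^+$ really is the largest entry: $\Lambda^+ > T/d$ forces $c = (T-\Lambda^+)/(d-1) < T/d < \Lambda^+$.
\end{itemize}
Symmetrically, $c' > T/d > \Lambda^-$ shows that $\Lambda^-$ is the smallest entry of $(c',\dots,c',\Lambda^-)$. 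Hence $B_2^\pm = \Lambda^\pm$.

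I do not expect a serious obstacle here. The only points needing care are the equality/double-root case and the observation that restricting attention to $(0,T)$ is legitimate, which Lemma~\ref{lem:interior} supplies. If a Lagrangian derivation is preferred for consistency with later sections, the stationarity condition $1 = \mu + \nu D/\lambda_1$ together with $0 = \mu + \nu D/\lambda_i$ for $i \ge 2$ forces $\lambda_2 = \dots = \lambda_d$, recovering the same two-cluster form. However, the AM--GM argument above is cleaner and gives global optimality directly.
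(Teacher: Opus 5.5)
Your proof is correct, and for the optimality part it takes a different route from the paper.

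\textbf{The paper's route.} It maximizes $\lambda_1$ subject to $\sum_i\lambda_i=T$ and $\sum_i\log\lambda_i=\log D$ via a Lagrangian. It invokes compactness to get a maximizer and reads off from the stationarity conditions $0=-\mu-\nu/\lambda_i$ ($i\ge 2$) that $\lambda_2=\cdots=\lambda_d$. Only then does it substitute to get $\phi_{T,D}(\Lambda)=0$ and run the same unimodality analysis of $g$ that you do. The lower bound is dismissed as ``symmetric.''

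\textbf{Your route.} You apply AM--GM to the $d-1$ coordinates complementary to an arbitrary fixed one. This gives the pointwise inequality $\phi_{T,D}(\lambda_j)\ge 0$ for every $\lambda\in\F_2$ and every $j$. Together with unimodality, it yields both bounds at once, and attainment follows from the explicit two-cluster points.

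\textbf{What your route buys.} The argument is global and needs no existence step. It needs no constraint qualification, which the multiplier argument uses tacitly; the gradients $\mathbf{1}$ and $(1/\lambda_i)_i$ are dependent exactly at the equispectrum point. There is no selection among stationary points, and the lower bound is actually proved rather than waved through. Moreover, the equality case of AM--GM shows directly that any feasible $\lambda$ with a coordinate equal to $\Lambda^\pm$ has all its other coordinates equal. That is the ``only if'' part of Theorem~\ref{thm:sharpness}, obtained without multipliers.

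\textbf{What the paper's route buys.} Extensibility. The stationarity computation is the template reused in Theorems~\ref{thm:three-cluster} and~\ref{thm:K-cluster}. There, after fixing $\lambda_1$, the remaining coordinates also carry power-sum constraints, so maximizing their product is no longer a one-line AM--GM.

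\textbf{On the root count.} Your restriction to $(0,T)$ is the right reading of the statement. For odd $d$, $g$ is increasing on $(T,\infty)$, so $\phi_{T,D}$ has one further real root beyond $T$. An example is the $k=0$ root in~\eqref{eq:cubic-roots} of Section~\ref{sec:closed-form}.
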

 
\begin{proof}
We solve the constrained optimization
\begin{equation}\label{eq:opt2}
\max_{\lambda \in \R^d_{>0}} \lambda_1 \quad\text{s.t.}\quad \sum_{i=1}^d \lambda_i = T,\ \sum_{i=1}^d \log \lambda_i = \log D.
\end{equation}
The objective and constraints are smooth on the open positive orthant, and the feasible set is compact (bounded by $\lambda_i \leq T$, closed by continuity), so a maximum is attained. By relabeling we may assume $\lambda_1 \geq \cdots \geq \lambda_d$. Consider the Lagrangian
\[
\mathcal{L}(\lambda; \mu, \nu) = \lambda_1 - \mu \Bigl(\sum_i \lambda_i - T\Bigr) - \nu \Bigl(\sum_i \log \lambda_i - \log D\Bigr).
\]
The first-order conditions for $i \geq 2$ read
\begin{equation}\label{eq:foc2}
0 = -\mu - \nu/\lambda_i, \qquad i = 2, \ldots, d,
\end{equation}
forcing $\lambda_i = -\nu/\mu = c$ to be the same constant for all $i = 2, \ldots, d$. Substituting back,
\begin{equation}\label{eq:two-cluster}
\Lambda + (d-1) c = T, \qquad \Lambda c^{d-1} = D,
\end{equation}
with $\Lambda := \lambda_1$. Eliminating $c = (T - \Lambda)/(d-1)$ yields $\phi_{T,D}(\Lambda) = 0$.
 
Setting $g(\Lambda) := \Lambda(T - \Lambda)^{d-1}$, one computes
\[
g'(\Lambda) = (T - \Lambda)^{d-2}(T - d\Lambda),
\]
so $g$ vanishes at $\Lambda = 0$ and $\Lambda = T$ and attains a unique maximum at $\Lambda = T/d$ with value $g(T/d) = T^d (d-1)^{d-1}/d^d$. The equation $g(\Lambda) = D(d-1)^{d-1}$ has right-hand side $\leq g(T/d)$ by AM--GM ($D \leq (T/d)^d$). Hence $\phi_{T,D}$ has either a unique double root at $T/d$ (when $D = (T/d)^d$) or exactly two simple roots $\Lambda^-, \Lambda^+$ with $\Lambda^- \in (0, T/d)$ and $\Lambda^+ \in (T/d, T)$. The relevant root for the maximum is $\Lambda^+$; the spectrum $(\Lambda^+, c, \ldots, c)$ realizes this maximum. The lower-bound case is symmetric.
\end{proof}
 
\subsection{Comparison with the AM--GM bound}
 
The AM--GM bound of~\cite[Theorem 3.1, $k=1$]{nayak2026} reads
\begin{equation}\label{eq:amgm-up}
\lambda_{\max} \leq T - (d-1)\bigl(D/T\bigr)^{1/(d-1)} =: B_2^{\text{AM--GM}, +}(T, D).
\end{equation}
 
\begin{corollary}\label{cor:dominance}
For all admissible $T, D > 0$ with $D \leq (T/d)^d$,
\[
B_2^+(T, D) \leq B_2^{\text{AM--GM}, +}(T, D),
\]
with equality if and only if $D = (T/d)^d$ (the equispectrum case).
\end{corollary}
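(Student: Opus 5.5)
The plan is to compare $\Lambda^+$ with the AM--GM expression directly through the two-cluster relations~\eqref{eq:two-cluster} from the proof of Theorem~\ref{thm:sharp2}. No analysis of $\phi_{T,D}$ beyond what Theorem~\ref{thm:sharp2} already provides should be needed. By that theorem, $B_2^+(T,D)=\Lambda^+$, and the extremal spectrum $(\Lambda^+,c,\ldots,c)$ satisfies
\[
\Lambda^+ + (d-1)c = T, \qquad \Lambda^+ c^{d-1} = D, \qquad 0<\Lambda^+<T, \quad c>0 .
\]
In the equispectrum case $D=(T/d)^d$ we use the double root, with $\Lambda^+=c=T/d$.

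The key step solves the second relation for $c$, giving $c=(D/\Lambda^+)^{1/(d-1)}$. Since $0<\Lambda^+\le T$, we have $D/\Lambda^+\ge D/T$. The map $x\mapsto x^{1/(d-1)}$ is increasing on $(0,\infty)$, so $c\ge (D/T)^{1/(d-1)}$. Substituting into the first relation gives
\[
\Lambda^+ = T-(d-1)c \;\le\; T-(d-1)\bigl(D/T\bigr)^{1/(d-1)} = B_2^{\text{AM--GM},+}(T,D),
\]
which is the asserted dominance. The same argument shows that $B_2^{\text{AM--GM},+}$ is exactly what one gets from~\eqref{eq:two-cluster} by replacing the unknown $\Lambda$ in $c^{d-1}=D/\Lambda$ by its crude upper bound $T$. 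This substitution is the precise sense in which the AM--GM bound is a relaxation.

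The equality analysis is the step I expect to be the real obstacle, because the monotonicity argument gives more than the statement seems to allow. Equality holds if and only if $D/\Lambda^+ = D/T$, that is, $\Lambda^+=T$. Theorem~\ref{thm:sharp2} excludes this, since $\Lambda^+<T$ in both cases, including $\Lambda^+=T/d$ in the equispectrum case. The inequality is therefore strict for every admissible $(T,D)$ with $d\ge 2$.

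I would check this against the equispectrum case explicitly. For $d=2$ and $D=T^2/4$ we get $B_2^+=T/2$, whereas $B_2^{\text{AM--GM},+}=T-D/T=3T/4$. In general $B_2^{\text{AM--GM},+}-T/d = \tfrac{d-1}{d}\,T\bigl(1-d^{-1/(d-1)}\bigr)>0$. So when writing the proof I would establish the strict inequality $B_2^+<B_2^{\text{AM--GM},+}$ and record that the equality clause cannot hold as literally stated.

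The closest true statement seems to be that the \emph{relative} gap closes only in degenerate limits, for example $D\to0$, where both bounds tend to $T$. A reasonable reinterpretation is that equality holds for the AM--GM bound of~\cite{nayak2026} evaluated at the exact cluster value rather than at $T$. I would flag this explicitly rather than force an equality case.
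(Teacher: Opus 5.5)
Your proof of the inequality is correct, and so is your conclusion about the equality clause. The corollary fails in its ``if'' direction. The true statement is the strict inequality $B_2^+(T,D) < B_2^{\text{AM--GM},+}(T,D)$ for all admissible $(T,D)$ with $d \ge 2$.

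Your route differs from the paper's at the one point that matters. You locate the relaxation in \eqref{eq:amgm-up} precisely: it replaces $\lambda_1$ by $T$ in $\lambda_2\cdots\lambda_d = D/\lambda_1 \ge D/T$. Since $\Lambda^+ < T$, this step is never tight.

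The paper's proof describes the relaxation differently, as ``$c \le T/(d-1)$''. Substituted into $\Lambda c^{d-1} = D$, that gives a \emph{lower} bound on $\Lambda$ and does not produce \eqref{eq:amgm-up}. The paper also attributes the slack to AM--GM among $\lambda_2,\ldots,\lambda_d$. But on the extremal spectrum $(\Lambda^+,c,\ldots,c)$ those entries are already equal, so that AM--GM step is always tight. The whole gap comes from $\lambda_1 \le T$, which the paper's argument overlooks.

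This is why the claimed equality case $D=(T/d)^d$ fails. Your computation shows it: $B_2^{\text{AM--GM},+} - T/d = \tfrac{d-1}{d}\,T\bigl(1-d^{-1/(d-1)}\bigr) > 0$, and for $d=2$ the two bounds are $T/2$ and $3T/4$. The ``only if'' direction holds only vacuously, since equality never occurs.

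Two small remarks.
\begin{itemize}
\item Your ``reinterpretation'' evaluates the AM--GM expression at $\Lambda^+$ instead of $T$, which returns $\Lambda^+$ identically. It adds nothing and can be dropped.
\item The $D \to 0$ observation is a correct statement about limits, since both bounds tend to $T$. It is not an equality case, because $D>0$.
\end{itemize}

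Strictness also does not need the two-cluster structure of Theorem~\ref{thm:sharp2}. For any feasible $\lambda$, $\lambda_1 < T$ gives $\lambda_2\cdots\lambda_d > D/T$. AM--GM then gives $\lambda_1 < T-(d-1)(D/T)^{1/(d-1)}$. Applying this at a maximizer, which exists by Lemma~\ref{lem:interior}, yields the strict inequality directly. So the AM--GM bound is never attained by any feasible spectrum.
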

 
\begin{proof}
The AM--GM bound corresponds to relaxing the second equation of~\eqref{eq:two-cluster} via $c \leq T/(d-1)$. The exact bound $B_2^+$ corresponds to the precise feasibility constraint $\Lambda c^{d-1} = D$ rather than an inequality. The two coincide only when AM--GM holds with equality among $\lambda_2, \ldots, \lambda_d$, forcing $\Lambda = c = T/d$.
\end{proof}
 
The dominance is strict in general: on the synthetic spectrum below, the AM--GM interval is conspicuously wider than the $B_2^+$ interval, and $B_3^+$ shrinks the envelope further by exploiting the additional information in $S$.

\section{The sharp three-invariant bound}\label{sec:three-inv}
 
We now turn to the three-invariant problem.
 
\begin{theorem}\label{thm:three-cluster}
Let $\lambda^*$ achieve the maximum in $B_3^+(T, S, D)$. Then $\lambda^*$ takes at most three distinct values.
\end{theorem}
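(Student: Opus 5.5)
The plan is to apply the Lagrange multiplier rule at $\lambda^*$, exactly as in the proof of Theorem~\ref{thm:sharp2}. The new ingredient is that the stationarity equation for the coordinates $i \geq 2$ becomes a quadratic in $\lambda_i$ instead of a linear one. By Lemma~\ref{lem:interior} (with $K=3$), the maximum is attained and $\lambda^*$ lies in the open orthant, so no inequality multipliers arise. As in \eqref{eq:opt2}, I would write the product constraint as $\sum_i \log\lambda_i = \log D$. The constraint map is then smooth near $\lambda^*$:
\[
h(\lambda) = \Bigl(\textstyle\sum_i \lambda_i - T,\ \sum_i \lambda_i^2 - S,\ \sum_i \log\lambda_i - \log D\Bigr).
\]

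First I dispose of the degenerate case, which is where constraint qualification could fail. If $\lambda^*$ has at most two distinct values, the conclusion holds trivially. So assume $\lambda^*$ has at least three distinct values, say $a,b,c$. The constraint gradients are $(1,\dots,1)$, $(2\lambda_i)_i$ and $(1/\lambda_i)_i$. Restricted to coordinates carrying $a,b,c$, they give the $3\times 3$ matrix with rows $(1,1,1)$, $(2a,2b,2c)$ and $(1/a,1/b,1/c)$. Multiplying the columns by $a,b,c$ turns it into a Vandermonde-type matrix in $a,b,c$, up to row scaling, with determinant proportional to $(a-b)(b-c)(c-a)\neq 0$. Hence LICQ holds, and there are multipliers $\mu,\nu,\rho$ with
\[
\delta_{i1} = \mu + 2\nu\lambda_i^* + \rho/\lambda_i^*, \qquad i=1,\dots,d.
\]

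Next, for every $i\geq 2$ the left side vanishes. Multiplying by $\lambda_i^*>0$ gives
\[
2\nu(\lambda_i^*)^2 + \mu\lambda_i^* + \rho = 0,
\]
so each $\lambda_i^*$ with $i \geq 2$ is a root of $q(t)=2\nu t^2+\mu t+\rho$. This polynomial is not identically zero: otherwise $\mu=\nu=\rho=0$, and the equation for $i=1$ would read $1=0$. So $q$ has at most two real roots, and $\lambda_2^*,\dots,\lambda_d^*$ take at most two distinct values. Adding $\lambda_1^*$ gives at most three distinct values, which proves the theorem. Both cases are then covered.

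The only step needing care is the constraint qualification. I handle it by the case split above: in the only case where it matters (three or more distinct values), the Vandermonde argument shows it holds. Alternatively, I could use the Fritz John conditions. There the abnormal case (zero objective multiplier) would force all $\lambda_i^*$, including $\lambda_1^*$, to be roots of a single nonzero quadratic, giving at most two values. This yields the same conclusion without the case split.
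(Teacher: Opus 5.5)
Your proof is correct and follows the same route as the paper: Lagrange stationarity at the interior maximizer (via Lemma~\ref{lem:interior}), then multiplying the condition for $i \geq 2$ by $\lambda_i$ to get a quadratic, so $\lambda_2,\dots,\lambda_d$ take at most two values and $\lambda_1$ adds at most one more. You also supply two points the paper leaves implicit, since interiority alone does not justify the multiplier rule: that a constraint qualification holds (your Vandermonde argument when there are at least three distinct values, or the Fritz John alternative), and that the quadratic is not identically zero (from the $i=1$ equation).
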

 
\begin{proof}
By Lemma~\ref{lem:interior} a maximizer $\lambda^*$ exists and lies in the interior of $\R^d_{>0}$, so the KKT conditions hold with equality multipliers only. Consider the Lagrangian
\[
\mathcal{L} = \lambda_1 - \mu_1\Bigl(\sum_i \lambda_i - T\Bigr) - \mu_2\Bigl(\sum_i \lambda_i^2 - S\Bigr) - \mu_3\Bigl(\sum_i \log \lambda_i - \log D\Bigr).
\]
The first-order conditions at $i \geq 2$ give
\begin{equation}\label{eq:foc3}
0 = -\mu_1 - 2\mu_2 \lambda_i - \mu_3/\lambda_i.
\end{equation}
Multiplying by $\lambda_i$ yields the quadratic $2\mu_2 \lambda_i^2 + \mu_1 \lambda_i + \mu_3 = 0$, so each $\lambda_i$ for $i \geq 2$ takes at most two distinct values. Including $\lambda_1$, the spectrum has at most three distinct values.
\end{proof}
 
\begin{theorem}\label{thm:sharp3}
Under the hypotheses of Theorem~\ref{thm:three-cluster}, the sharp upper bound is
\begin{equation}\label{eq:sharp3-formula}
B_3^+(T, S, D) = \max\bigl\{ \Lambda : (\Lambda, a, b, j) \in \R_{>0}^3 \times \{0, \ldots, d-1\} \text{ satisfies } (\star_j) \bigr\},
\end{equation}
where
\begin{equation}\label{eq:starj}
\begin{cases}
\Lambda + j a + (d - 1 - j) b = T,\\
\Lambda^2 + j a^2 + (d - 1 - j) b^2 = S,\\
\Lambda \cdot a^j \cdot b^{d - 1 - j} = D.
\end{cases}
\tag{$\star_j$}
\end{equation}
\end{theorem}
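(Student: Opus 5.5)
The plan is to prove the equality in \eqref{eq:sharp3-formula} as two inequalities, writing $M$ for the right-hand side. Lemma~\ref{lem:interior} and Theorem~\ref{thm:three-cluster} supply the structural input, and the rest is bookkeeping of multiplicities.

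\emph{Step 1: $M \le B_3^+$.} Fix any $j \in \{0,\ldots,d-1\}$ and any positive triple $(\Lambda,a,b)$ solving $(\star_j)$. Form the vector
\[
\lambda = (\Lambda, \underbrace{a,\ldots,a}_{j}, \underbrace{b,\ldots,b}_{d-1-j}) \in \R^d_{>0}.
\]
The three equations of $(\star_j)$ say exactly that $\sum_i\lambda_i = T$, $\sum_i\lambda_i^2 = S$ and $\prod_i\lambda_i = D$. Hence $\lambda \in \F_3(T,S;D)$, and $\Lambda = \lambda_1 \le B_3^+$. Taking the supremum over all such $(\Lambda,a,b,j)$ gives $M \le B_3^+$.

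\emph{Step 2: $B_3^+ \le M$.} The hypotheses of Theorem~\ref{thm:three-cluster} include that $\F_3$ is nonempty. By Lemma~\ref{lem:interior}, $B_3^+$ is attained at some interior point $\lambda^*$.
\begin{itemize}
\item The KKT analysis in the proof of Theorem~\ref{thm:three-cluster} shows that every coordinate $\lambda^*_i$ with $i\ge 2$ is a root of the fixed quadratic $2\mu_2 t^2 + \mu_1 t + \mu_3$.
\item So the entries $\lambda^*_2,\ldots,\lambda^*_d$ take at most two values, say $a$ and $b$. If only one value occurs, take $a=b$ and $j=0$.
\item Let $j$ be the number of indices $i\ge2$ with $\lambda^*_i = a$. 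Then $(\lambda^*_1, a, b, j)$ satisfies $(\star_j)$ with all entries positive, so $B_3^+ = \lambda^*_1 \le M$.
\end{itemize}
The point to check is that the conditions at $i \ge 2$ do not involve $\lambda_1$ at all. This is why $\lambda_1$ sits in its own slot, whether or not it coincides with $a$ or $b$.

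\emph{Step 3: the maximum in $M$ is attained.} Since $\{0,\ldots,d-1\}$ is finite, it suffices to show that each solution set of $(\star_j)$ is compact. The first two constraints bound $\Lambda, a, b$ above by $T$. The third, together with $\Lambda, a, b \le T$, bounds each of them below by $D/T^{d-1}$, exactly as in Lemma~\ref{lem:interior}. So each solution set is closed and bounded in $\R^3_{>0}$. Step 2 also shows the supremum is actually achieved at $\lambda^*$.

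\emph{Main obstacle.} The delicate step is the KKT step itself: it needs the constraint gradients to be linearly independent (LICQ). The gradients at the coordinates are $(1, 2\lambda_i, 1/\lambda_i)$. If the vectors $(1,\lambda_i,1/\lambda_i)$ span fewer than three dimensions, LICQ may fail. I would handle this through the Fritz John conditions, which allow a multiplier $\mu_0$ on the objective. Whether $\mu_0 = 0$ or not, every coordinate with $i\ge2$ still satisfies a quadratic relation $\alpha t^2+\beta t+\gamma=0$ with $(\alpha,\beta,\gamma)\ne 0$. This relation is nontrivial because its coefficients are the nonzero multipliers $(\mu_3,\mu_1,2\mu_2)$ up to order. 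Therefore the coordinates $\lambda_2,\ldots,\lambda_d$ still take at most two values, and the reduction in Step 2 goes through unchanged.
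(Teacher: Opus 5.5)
Your proof is correct and follows the paper's argument: feasibility of every cluster solution of $(\star_j)$ gives $M \le B_3^+$, and the cluster structure of an interior maximizer gives the reverse inequality. Two refinements you add are worth keeping: you rightly use the finer fact from the \emph{proof} of Theorem~\ref{thm:three-cluster} (that $\lambda_2,\ldots,\lambda_d$ are roots of one nonzero quadratic, so $\lambda_1$ gets its own slot), and your Fritz John argument closes the constraint-qualification gap that the paper's KKT step leaves implicit, since LICQ fails exactly when the spectrum has at most two distinct values. One small slip: in Step~3 the solution sets for $j=0$ and $j=d-1$ are not compact, because $a$ (resp.\ $b$) is then unconstrained; this is harmless, since Steps~1--2 already show the supremum is attained at $\lambda^*$.
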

 
\begin{proof}
By Theorem~\ref{thm:three-cluster}, an extremal spectrum has the form $(\Lambda, a, \ldots, a, b, \ldots, b)$ with $j$ copies of $a$ and $d-1-j$ copies of $b$. Substituting into the three invariant equations gives $(\star_j)$. Conversely, any solution corresponds to a feasible spectrum; taking the maximum over $j$ yields the sharp bound.
\end{proof}
 
\subsection{Reduction to a single-variable problem}
 
For each fixed $j \in \{1, \ldots, d-2\}$, the first two equations of~\eqref{eq:starj} determine $a, b$ in terms of $\Lambda$ up to the binary choice of a quadratic. Writing $A := T - \Lambda$ and $B := S - \Lambda^2$, substituting $b = (A - ja)/(d-1-j)$ into the second equation produces
\begin{equation}\label{eq:quadratic}
(d-1)\, a^2 \;-\; 2A \,a \;+\; \frac{A^2 - B(d-1-j)}{j} \;=\; 0,
\end{equation}
with discriminant $\Delta_j(\Lambda) = 4A^2 - 4(d-1)(A^2 - B(d-1-j))/j$. For each branch (sign in the quadratic formula), the third equation $\Lambda a^j b^{d-1-j} = D$ becomes a single equation in the single variable $\Lambda$, solvable by bisection in $\mathcal{O}(\log(1/\varepsilon))$ time for accuracy $\varepsilon$.
 
\begin{remark}\label{rem:two-cluster-special}
The cases $j = 0$ and $j = d-1$ correspond to spectra with only two distinct values. They reduce to systems in two unknowns with three equations, hence are generically inconsistent unless the third invariant is determined by the first two. Hence in nondegenerate cases $j \in \{1, \ldots, d-2\}$.
\end{remark}
 
\subsection{Algorithmic implementation}
 
The reduction of the three-invariant problem to single-variable bisection enables an efficient algorithm. We give explicit pseudocode below.
 
\begin{algorithm}[ht]
\caption{Sharp three-invariant upper bound $B_3^+$}
\label{alg:B3plus}
\begin{algorithmic}[1]
\Require Invariants $T, S, D > 0$; tensor parameters $m, n$; tolerance $\varepsilon > 0$.
\State Compute $d \gets n(m-1)^{n-1}$.
\State $\Lambda^* \gets T/d$ \Comment{trivial baseline}
\For{$j = 1, 2, \ldots, d-2$}
    \For{$\sigma \in \{+1, -1\}$}
        \State Define $\Phi_{j, \sigma}(\Lambda)$ via the quadratic~\eqref{eq:quadratic} and the determinant equation:
        \[
        \Phi_{j, \sigma}(\Lambda) := \Lambda \cdot a_\sigma(\Lambda)^j \cdot b_\sigma(\Lambda)^{d-1-j} - D,
        \]
        where $a_\sigma(\Lambda) = \frac{2A + \sigma\sqrt{\Delta_j(\Lambda)}}{2(d-1)}$ and $b_\sigma(\Lambda) = (A - j a_\sigma)/(d-1-j)$.
        \State Locate sign changes of $\Phi_{j, \sigma}$ in $(T/d, T-\varepsilon)$ on a grid.
        \For{each interval $[\alpha, \beta]$ with $\Phi_{j, \sigma}(\alpha) \cdot \Phi_{j, \sigma}(\beta) < 0$}
            \State $\Lambda_\text{sol} \gets \mathrm{bisect}(\Phi_{j, \sigma}, \alpha, \beta, \varepsilon)$
            \If{$\Lambda_\text{sol} > \Lambda^*$ \textbf{and} $a_\sigma(\Lambda_\text{sol}), b_\sigma(\Lambda_\text{sol}) > 0$}
                \State $\Lambda^* \gets \Lambda_\text{sol}$
            \EndIf
        \EndFor
    \EndFor
\EndFor
\State \Return $\Lambda^*$
\end{algorithmic}
\end{algorithm}
 
\paragraph{Complexity.} The outer loop has $\mathcal{O}(d)$ iterations. Each bisection runs in $\mathcal{O}(\log(1/\varepsilon))$ time with evaluations costing $\mathcal{O}(d)$ each (for the powers $a^j$ and $b^{d-1-j}$). Hence the total time is $\mathcal{O}(d^2 \log(1/\varepsilon))$, dominated by the trace-formula cost of computing $S$ in the first place.
 
\subsection{Worked example}\label{sec:worked}

We work out $B_3^+$ for the synthetic spectrum $(5, 4, 3, 2, 1, 1)$, with $T = 16$, $S = 56$, $D = 120$, $d = 6$. Taking $j = 4$ (which yields the maximum, by symmetry with $j = 1$), the quadratic~\eqref{eq:quadratic} reduces to $20 a^2 - 8(16 - \Lambda) a + (2\Lambda^2 - 32\Lambda + 200) = 0$, and the determinant equation $\Lambda a^4 b = 120$ with $b = 16 - \Lambda - 4a$ becomes a single equation in $\Lambda$. Bisection in $(8/3, 16)$ gives $\Lambda^* \approx 5.643$, with cluster values $a^* \approx 2.439$ (multiplicity 4) and $b^* \approx 0.601$ (multiplicity 1), satisfying all three invariants exactly ($16.00$, $56.00$, $120.0$). Sweeping $j = 1, \ldots, 4$ gives candidates $\{5.643, 5.214, 5.214, 5.643\}$, so $B_3^+ = 5.643$. The true $\lambda_{\max} = 5$ lies below this, with a $13\%$ gap reflecting that the true spectrum is not extremal in $\F_3$.

\subsection{Sharpness and equality conditions}
 
We now characterize when the bounds in Theorems~\ref{thm:sharp2} and~\ref{thm:sharp3} are exact.
 
\begin{theorem}\label{thm:sharpness}
The two-invariant bound $B_2^+(T, D)$ is attained by the spectrum of a tensor $\A$ if and only if $\A$ has H-spectrum of the form
\begin{equation}\label{eq:two-cluster-spec}
\bigl(\Lambda^+, c, c, \ldots, c\bigr) \quad\text{with } c = (T - \Lambda^+)/(d-1),
\end{equation}
i.e.\ a two-cluster spectrum with one outlier. The three-invariant bound $B_3^+(T, S, D)$ is attained iff the H-spectrum has one of the at-most-three-cluster forms identified by Theorem~\ref{thm:sharp3}.
\end{theorem}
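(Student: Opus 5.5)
The plan is to read ``$B_2^+(T,D)$ is attained by the spectrum of $\A$'' as $\lambda_{\max}(\A)=B_2^+(T,D)$, where $T,D$ are the invariants of $\A$ itself. Then both directions reduce to Theorem~\ref{thm:sharp2} together with the structure of maximizers from Lemma~\ref{lem:interior}. The three-invariant statement will be handled the same way, using Theorems~\ref{thm:three-cluster} and~\ref{thm:sharp3}.

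\emph{Two-invariant case.} For ($\Leftarrow$), suppose the H-spectrum of $\A$ is $(\Lambda^+,c,\ldots,c)$ with $c=(T-\Lambda^+)/(d-1)$. This spectrum lies in $\F_2(T;D)$ by definition, and its largest entry is $\Lambda^+$, because $\Lambda^+>T/d>c$ by Theorem~\ref{thm:sharp2}. Hence $\lambda_{\max}=\Lambda^+=B_2^+$.

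For ($\Rightarrow$), suppose $\lambda_{\max}(\A)=B_2^+$. The spectrum of $\A$, reordered so that $\lambda_1=\lambda_{\max}$, is a feasible point of \eqref{eq:opt2} that achieves the maximum, so it is a maximizer. By Lemma~\ref{lem:interior} it is interior, so the Lagrange conditions \eqref{eq:foc2} apply, provided the constraint qualification holds. The constraint gradients are $\mathbf 1$ and $(1/\lambda_i)_i$. They are linearly dependent only when all $\lambda_i$ are equal, which is the equispectrum case $D=(T/d)^d$. In that case the spectrum is $(T/d,\ldots,T/d)$, which has the stated form with $\Lambda^+=c=T/d$.

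Otherwise the multipliers exist, and \eqref{eq:foc2} forces $\lambda_2=\cdots=\lambda_d=c$. Solving \eqref{eq:two-cluster} with $\lambda_1=\Lambda^+$ gives $c=(T-\Lambda^+)/(d-1)$, which is \eqref{eq:two-cluster-spec}. I must ensure the multiplier $\mu$ is nonzero so that $c=-\nu/\mu$ is well defined. If $\mu=0$, then \eqref{eq:foc2} gives $\nu=0$, and the $i=1$ condition $1=\mu+\nu/\lambda_1$ then fails. So $\mu\neq 0$.

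\emph{Three-invariant case.} The argument is identical. The ($\Leftarrow$) direction is immediate from Theorem~\ref{thm:sharp3}: a spectrum of the form $(\Lambda,a^{(j)},b^{(d-1-j)})$ solving $(\star_j)$ with $\Lambda$ equal to the maximum in \eqref{eq:sharp3-formula} is feasible and attains $B_3^+$. One point needs care here: $\Lambda$ must be the largest entry. This holds because $\lambda_1$ is maximized over all orderings.

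For ($\Rightarrow$), a spectrum with $\lambda_{\max}=B_3^+$ is a maximizer in $\F_3$. By Lemma~\ref{lem:interior} and Theorem~\ref{thm:three-cluster} it has at most three values, and substituting gives $(\star_j)$ for some $j$.

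\emph{Main obstacle.} The delicate step is the constraint qualification, that is, linear independence of the gradients $\mathbf 1$, $(2\lambda_i)$ and $(1/\lambda_i)$. Theorem~\ref{thm:three-cluster} used it implicitly. These gradients are dependent exactly when the $\lambda_i$ take at most two distinct values, since the Vandermonde-type vectors in $\lambda_i^{-1},1,\lambda_i$ are then dependent. But such a spectrum already has at most three values, so the conclusion holds trivially in that degenerate case. I will state this dichotomy explicitly, mirroring Remark~\ref{rem:two-cluster-special} and the convention that $j=0$ or $j=d-1$ is allowed in Theorem~\ref{thm:sharp3}.
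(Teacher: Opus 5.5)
Your proof is correct and follows the paper's route. The ``if'' directions come by construction from Theorems~\ref{thm:sharp2} and~\ref{thm:sharp3}. The ``only if'' directions come from noting that the tensor's spectrum is then a maximizer over $\F_2$ (resp.\ $\F_3$), then applying the Lagrangian cluster analysis of \eqref{eq:foc2} (resp.\ Theorem~\ref{thm:three-cluster}). Your extra checks supply rigor the paper's short proof leaves implicit: the constraint-qualification dichotomy (which degenerates only when the spectrum already has at most one, resp.\ two, distinct values), the argument that $\mu\neq 0$, and reading the three-invariant ``if'' direction as requiring $\Lambda = B_3^+$. That last reading is needed, since non-maximal solutions of $(\star_j)$ exist, as the candidate list in Section~\ref{sec:worked} shows.
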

 
\begin{proof}
The ``if'' direction is by construction in Theorem~\ref{thm:sharp2}. For the ``only if'' direction, suppose the bound is attained, i.e.\ $\lambda_{\max}(\A) = B_2^+$. Then the spectrum $(\lambda_1, \ldots, \lambda_d)$ achieves the maximum in~\eqref{eq:opt2}, and by the Lagrangian analysis, $\lambda_2 = \cdots = \lambda_d = c$. The three-invariant case is analogous via Theorem~\ref{thm:three-cluster}.
\end{proof}
 
\begin{remark}
For ``generic'' tensors the spectrum is far from a two-cluster form, so $B_2^+$ overestimates strictly. The strength of the three-invariant bound is precisely that it captures more spectral structure: tensors whose spectrum has at most three clusters (which includes a substantial class) achieve the bound exactly.
\end{remark}
 
\section{The general $K$-invariant hierarchy}\label{sec:K-inv}
 
We now generalize Theorems~\ref{thm:sharp2} and~\ref{thm:three-cluster} to an arbitrary number of invariants. Throughout, $K$ denotes the \emph{total} number of invariants used: $K - 1$ power-sum constraints together with the determinant.
 
\begin{theorem}[$K$-invariant structural theorem]\label{thm:K-cluster}
Let $K \geq 2$ and suppose $\lambda^* \in \F_K(T_1, \ldots, T_{K-1}; D)$ achieves the maximum in $B_K^+$. Then $\lambda^*$ takes at most $K$ distinct values.
\end{theorem}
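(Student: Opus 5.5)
The plan is to mirror the proofs of Theorems~\ref{thm:sharp2} and~\ref{thm:three-cluster}, with one change. I will use the Fritz John form of the first-order conditions rather than the plain Lagrange form. This avoids having to verify a constraint qualification: the gradients of the $K$ constraints may well be linearly dependent at a maximizer. The reason is that when $\lambda^*$ takes fewer than $K$ distinct values, the gradient vectors $(k\lambda_i^{k-1})_i$ and $(1/\lambda_i)_i$ restricted to the distinct values span a space of dimension less than $K$.

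First, invoke Lemma~\ref{lem:interior}. It gives that a maximizer $\lambda^*$ exists and lies in the open orthant, so only the equality constraints are active. As in Theorem~\ref{thm:sharp2}, I write the product constraint as $\sum_i \log\lambda_i = \log D$, which is smooth there. The Fritz John theorem then supplies multipliers $(\nu_0,\mu_1,\ldots,\mu_{K-1},\mu_K)$, not all zero, such that for every $i$
\[
\nu_0\,\delta_{i1} \;=\; \sum_{k=1}^{K-1} k\,\mu_k\,(\lambda_i^*)^{k-1} \;+\; \frac{\mu_K}{\lambda_i^*}.
\]
Multiplying by $\lambda_i^*>0$, every index $i\ge 2$ satisfies $q(\lambda_i^*)=0$, where
\[
q(t) := \sum_{k=1}^{K-1} k\,\mu_k\, t^{k} + \mu_K
\]
is a polynomial of degree at most $K-1$.

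The key step is to show that $q$ is not the zero polynomial. Suppose $q\equiv 0$. Then $\mu_1=\cdots=\mu_K=0$, so the $i=1$ equation gives $\nu_0 = 0$. This contradicts nontriviality of the Fritz John multipliers. Hence $q$ is a nonzero polynomial of degree at most $K-1$ and has at most $K-1$ real roots. So $\lambda_2^*,\ldots,\lambda_d^*$ take at most $K-1$ distinct values, and adding $\lambda_1^*$ gives at most $K$ distinct values.

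I expect the main obstacle to be precisely this nondegeneracy of the multipliers. A naive Lagrange argument as in Theorem~\ref{thm:three-cluster} tacitly assumes a regular point, and that assumption can fail exactly at clustered spectra, which are the ones of interest. The Fritz John formulation sidesteps this, and the only thing to check is the short argument above that the all-$\mu$-zero case forces $\nu_0=0$.

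Two small remarks complete the plan. First, the same argument applied to $-\lambda_d$ gives the dual statement for $B_K^-$. Second, for $K=2$ and $K=3$ the argument recovers Theorems~\ref{thm:sharp2} and~\ref{thm:three-cluster}, since there $q$ is linear and quadratic respectively.
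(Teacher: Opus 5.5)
Your proof is correct. It follows the same skeleton as the paper's proof: interiority via Lemma~\ref{lem:interior}, the determinant constraint written as $\sum_i \log\lambda_i = \log D$, the stationarity condition at indices $i \ge 2$, and multiplication by $\lambda_i$ to obtain a polynomial of degree at most $K-1$.

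The one real difference is that you use the Fritz John conditions where the paper writes down ordinary Lagrange multipliers. This difference matters. The paper's remark that interiority means stationarity ``holds without inequality multipliers'' only deals with the boundary of the orthant. It never checks a constraint qualification for the $K$ equality constraints. As you observe, the constraint gradients are linearly dependent exactly when $\lambda^*$ has at most $K-1$ distinct values: a dependence relation is a nonzero polynomial $c_K + \sum_k k c_k t^k$ of degree at most $K-1$ vanishing at every distinct value of $\lambda^*$.

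Your formulation handles that case without a separate argument. The price is that you must check $q \not\equiv 0$, and you do this correctly through the $i=1$ equation. In the paper's normal-multiplier form this check is automatic, because the $i=1$ equation reads $1 = q(\lambda_1)/\lambda_1$.

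The paper's argument can also be repaired by a case split. If $\lambda^*$ has at most $K-1$ distinct values, there is nothing to prove. Otherwise the same Vandermonde-type observation shows the gradients are linearly independent, so ordinary multipliers exist. Either way, the paper's gap is one of rigor rather than substance, and your Fritz John version closes it without splitting into cases.
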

 
\begin{proof}
By Lemma~\ref{lem:interior} the maximizer $\lambda^*$ exists and is interior, so stationarity holds without inequality multipliers. The Lagrangian for the $K-1$ power-sum constraints $\sum \lambda_i^k = T_k$ ($k = 1, \ldots, K-1$) and the determinant constraint $\prod \lambda_i = D$ (equivalently $\sum \log \lambda_i = \log D$) is
\[
\mathcal{L} = \lambda_1 - \sum_{k=1}^{K-1} \mu_k\Bigl(\sum_i \lambda_i^k - T_k\Bigr) - \mu_{K}\Bigl(\sum_i \log \lambda_i - \log D\Bigr).
\]
The first-order conditions at $i \geq 2$ give
\begin{equation}\label{eq:focK}
0 = -\sum_{k=1}^{K-1} k \mu_k \lambda_i^{k-1} - \mu_{K}/\lambda_i.
\end{equation}
Multiplying by $\lambda_i$ produces the polynomial equation
\begin{equation}\label{eq:K-poly}
\sum_{k=1}^{K-1} k \mu_k \lambda_i^k + \mu_{K} = 0,
\end{equation}
of degree at most $K - 1$ in $\lambda_i$. Hence $\lambda_i$ for $i \geq 2$ takes at most $K - 1$ distinct values. Including $\lambda_1$, the spectrum has at most $K$ distinct values.
\end{proof}
 
\begin{corollary}\label{cor:hierarchy}
The bounds $B_K^+$ form a strictly increasing-information hierarchy:
\[
B_2^+ \geq B_3^+ \geq B_4^+ \geq \cdots \geq \lambda_{\max}.
\]
Equality $B_K^+ = B_{K+1}^+$ holds iff the additional power sum $T_K$ is determined (modulo positivity constraints) by $(T_1, \ldots, T_{K-1}, D)$ along the optimizing spectrum.
\end{corollary}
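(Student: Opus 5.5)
The chain $B_2^+ \geq B_3^+ \geq \cdots \geq \lambda_{\max}$ follows from a nesting of feasibility regions. First, for a fixed tensor $\A$ with invariants $T_k = p_k(\A)$ and $D = \det(\A)$, I show that $\F_{K+1}(T_1,\ldots,T_K;D) \subseteq \F_K(T_1,\ldots,T_{K-1};D)$. This holds because $\F_{K+1}$ imposes every constraint of $\F_K$ plus the extra equation $\sum_i \lambda_i^K = T_K$. Next I observe that the true ordered spectrum $(\lambda_1,\ldots,\lambda_d)$ of $\A$ lies in every $\F_K$, by the definitions of $p_k$ and $D$. So every region is nonempty, and by Lemma~\ref{lem:interior} every maximum $B_K^+$ is attained. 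Maximizing $\lambda_1$ over a smaller set gives a value no larger, so $B_{K+1}^+ \leq B_K^+$. Membership of the true spectrum gives $B_K^+ \geq \lambda_{\max}$. The dual chain for $B_K^-$ follows by the same argument with $\min$ replacing $\max$.

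For the equality clause I read ``determined along the optimizing spectrum'' as follows: some maximizer $\lambda^*$ of $B_K^+$ satisfies $p_K(\lambda^*) = T_K$.

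For the ``if'' direction, suppose such a $\lambda^*$ exists. Then $\lambda^* \in \F_{K+1}$, so $B_{K+1}^+ \geq \lambda_1^* = B_K^+$, and combined with the monotonicity already shown this gives equality.

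For the ``only if'' direction, suppose $B_{K+1}^+ = B_K^+$. Take a maximizer $\tilde\lambda$ of $B_{K+1}^+$, which exists by Lemma~\ref{lem:interior}. It lies in $\F_K$ and attains $\lambda_1 = B_K^+$, so it is also a maximizer for $B_K^+$, and it satisfies $p_K(\tilde\lambda) = T_K$ by construction. Theorem~\ref{thm:K-cluster} then says $\tilde\lambda$ has at most $K$ distinct values. Its $K$-th power sum is therefore a function of the cluster values and their multiplicities, which are in turn fixed by the $K$ equations of $\F_K$. This is the sense in which $T_K$ is determined by $(T_1,\ldots,T_{K-1},D)$ along the optimizing spectrum. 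The positivity caveat covers the possibility that several branches of the cluster system exist and only the positive ones count.

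The main obstacle is interpretive rather than technical. The words ``strictly increasing-information'' and ``determined (modulo positivity constraints)'' are vague. If ``determined'' is read as meaning $T_K$ is a genuine algebraic function of the lower invariants, the claim is not literally true: the maximizer set for $B_K^+$ can contain several points with different $p_K$ values. So I would fix the precise formulation above, that equality holds iff some $B_K^+$-maximizer has $K$-th power sum equal to $T_K$, and note that this is the content the corollary asserts. With that reading the proof is just the two set-inclusion arguments plus attainment from Lemma~\ref{lem:interior}. I would add a remark that the inequality can be non-strict, for example when the true spectrum already has at most $K$ clusters and is extremal, as in Theorem~\ref{thm:sharpness}.
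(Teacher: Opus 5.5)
Your argument for the chain is the same as the paper's. $\F_{K+1}\subseteq\F_K$ because one more equation is imposed, so the maximum of $\lambda_1$ can only decrease. The true spectrum lies in every $\F_K$, which gives $B_K^+\geq\lambda_{\max}$. For that membership you and the paper both tacitly use the standing assumption that the H-spectrum is real and positive.

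The difference is the equality clause. The paper's proof stops after the inequalities and never argues the ``iff''. You make it precise: equality holds iff some maximizer of $\lambda_1$ over $\F_K$ satisfies $p_K = T_K$. You then prove both directions from attainment via Lemma~\ref{lem:interior}. That argument is correct and supplies what the paper leaves unproved. Its rigorous content needs only nesting and compactness, not Theorem~\ref{thm:K-cluster}.

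Your follow-up claim that the cluster values and multiplicities are ``fixed by the $K$ equations of $\F_K$'' overstates matters. For each partition those equations typically have several positive solutions, and the partition itself varies. The optimizer is therefore singled out by the maximization, not by the equations, so keep that sentence as heuristic gloss rather than a proof step.

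Two of your remarks are apt. The literal reading ``$T_K$ is a function of the lower invariants'' can fail. And the inequalities need not be strict despite the phrase ``strictly increasing-information''.
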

 
\begin{proof}
Each additional invariant constrains the feasibility region $\F_K \supseteq \F_{K+1}$, hence the maximum over $\F_K$ dominates that over $\F_{K+1}$. The bound $B_K^+ \geq \lambda_{\max}$ holds because the actual spectrum is feasible in every $\F_K$.
\end{proof}
 
The general $K$-invariant problem reduces, via Theorem~\ref{thm:K-cluster}, to a finite union of polynomial systems indexed by integer cluster partitions $(j_1, \ldots, j_{K-1})$ with $j_1 + \cdots + j_{K-1} = d - 1$:
\begin{equation}\label{eq:K-system}
\begin{cases}
\Lambda^k + \sum_{r=1}^{K-1} j_r a_r^k = T_k, & k = 1, \ldots, K-1,\\
\Lambda \prod_{r=1}^{K-1} a_r^{j_r} = D,
\end{cases}
\end{equation}
in $K$ unknowns $(\Lambda, a_1, \ldots, a_{K-1})$ with $K$ equations. For each fixed cluster partition the system is generically zero-dimensional (finitely many solutions), and the maximum $\Lambda$ over all partitions and solutions is the sharp bound $B_K^+$.
 
\subsection{Numerical illustration: convergence in $K$}
 
We test the hierarchy on the synthetic spectrum $(5, 4, 3, 2, 1, 1)$ used in Section~\ref{sec:three-inv}. The four power sums are
\[
T = 16, \quad S = 56, \quad p_3 = 226, \quad p_4 = 980, \quad D = 120.
\]
Solving the corresponding $K$-invariant problems yields:
 
\begin{center}
\begin{tabular}{lcc}
\toprule
Invariants used & $K$ & Bound $B_K^+$\\
\midrule
AM--GM bound (Nayak et al.) & --- & 8.519\\
$T, D$ & 2 & 7.231\\
$T, S, D$ & 3 & 5.643\\
$T, S, p_3, D$ & 4 & 5.073\\
True $\lambda_{\max}$ & --- & 5.000\\
\bottomrule
\end{tabular}
\end{center}

\begin{remark}[Trade-off]
While each additional invariant tightens the bound, the cost of computing $p_k$ via the Hu--Huang--Ling--Qi formula~\cite{hu2013} grows combinatorially in $k$. In practice, the three-invariant bound (using $T$, $S$, and $D$) offers an excellent balance: it captures the leading correction to the AM--GM bound at moderate cost.
\end{remark}
 
\section{Rigorous treatment of the four-invariant case}\label{sec:K4}
 
We now develop the four-invariant case ($T, S, p_3, D$) in full rigor. By Theorem~\ref{thm:K-cluster} (specialized to $K = 4$), an extremal spectrum has at most four distinct values. This case is of practical importance because it represents the first level of the hierarchy where the third-order trace $p_3$ is needed, and it offers a substantial empirical improvement over the three-invariant bound for spectra with four-cluster structure.
 
\subsection{Problem formulation and structural theorem}
 
\begin{theorem}\label{thm:K4-structure}
Let $\A \in \R^{[n,m]}$ be symmetric positive definite with $T, S, p_3, D$ as defined in Section~\ref{sec:prelim}. Suppose $\lambda^* \in \F_4(T, S, p_3; D)$ achieves the maximum in $B_4^+(T, S, p_3, D)$. Then $\lambda^*$ takes at most four distinct values, and is of the form
\begin{equation}\label{eq:K4-spectrum}
\bigl(\,\Lambda;\; \underbrace{a, \ldots, a}_{j_1};\; \underbrace{b, \ldots, b}_{j_2};\; \underbrace{c, \ldots, c}_{j_3}\,\bigr), \quad j_1 + j_2 + j_3 = d - 1,\ \ j_i \geq 0.
\end{equation}
\end{theorem}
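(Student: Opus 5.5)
The plan is to specialize the argument of Theorem~\ref{thm:K-cluster} to $K=4$ and then read off the block form~\eqref{eq:K4-spectrum} by grouping equal coordinates.

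First, by Lemma~\ref{lem:interior} (with $K=4$), $\F_4(T,S,p_3;D)$ is a compact subset of the open orthant. Hence the maximizer $\lambda^*$ exists and is interior, and only equality multipliers appear. I will form the Lagrangian
\[
\mathcal{L} = \lambda_1 - \mu_1\Bigl(\sum_i \lambda_i - T\Bigr) - \mu_2\Bigl(\sum_i \lambda_i^2 - S\Bigr) - \mu_3\Bigl(\sum_i \lambda_i^3 - p_3\Bigr) - \mu_4\Bigl(\sum_i \log\lambda_i - \log D\Bigr).
\]
For $i\ge 2$, multiplying the stationarity condition by $\lambda_i>0$ gives the cubic
\[
3\mu_3\lambda_i^3 + 2\mu_2\lambda_i^2 + \mu_1\lambda_i + \mu_4 = 0.
\]
This is exactly~\eqref{eq:K-poly} with $K=4$. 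If this polynomial is not identically zero, the coordinates $\lambda_2,\dots,\lambda_d$ lie among at most three roots, say $a,b,c$. Including $\lambda_1=\Lambda$ gives at most four distinct values. Relabeling the indices $i\ge2$ (the objective depends only on $\lambda_1$) groups them into blocks of sizes $j_1,j_2,j_3\ge0$ with $j_1+j_2+j_3=d-1$. Empty blocks are allowed, which covers the case of fewer roots.

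The main obstacle is justifying the multiplier rule itself. It requires a constraint qualification: the gradients of the four constraints at $\lambda^*$ must be linearly independent. These gradients are the vectors $(1)_i$, $(2\lambda_i)_i$, $(3\lambda_i^2)_i$ and $(1/\lambda_i)_i$. After multiplying each column by $\lambda_i$, they span the same space as the restrictions of the polynomials $t,t^2,t^3,1$ to the set of values taken by $\lambda^*$. These are independent exactly when $\lambda^*$ has at least four distinct values, by a Vandermonde argument.

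So I will split into two cases.
\begin{itemize}
\item If $\lambda^*$ has at most three distinct values, then it already has the claimed form with some $j_r=0$ or with coinciding values, and there is nothing to prove.
\item Otherwise LICQ holds, Lagrange multipliers exist, and the cubic argument applies.
\end{itemize}
The Lagrangian argument also needs the cubic to be nontrivial. If all of $\mu_1,\dots,\mu_4$ vanished, the $i=1$ equation $1-\mu_1-2\mu_2\lambda_1-3\mu_3\lambda_1^2-\mu_4/\lambda_1=0$ would give $1=0$. So the multiplier vector is nonzero, and a nonzero polynomial of degree at most three has at most three roots.

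This case split shows that the tacit regularity assumption in Theorem~\ref{thm:K-cluster} is harmless. It completes the proof of Theorem~\ref{thm:K4-structure}.
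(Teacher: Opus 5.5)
Your argument is correct and follows the paper's route. You use interiority from Lemma~\ref{lem:interior} and then specialize the Lagrangian argument of Theorem~\ref{thm:K-cluster} to $K=4$, obtaining a nonzero cubic whose roots contain $\lambda_2,\dots,\lambda_d$. Your case split on the number of distinct values, which secures LICQ through the Vandermonde observation, and your check that the multipliers cannot all vanish fill in a regularity step that the paper's proof leaves implicit.
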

 
\begin{proof}
By Lemma~\ref{lem:interior} the maximizer is interior; specialize Theorem~\ref{thm:K-cluster} to $K = 4$.
\end{proof}
 
\begin{theorem}\label{thm:K4-system}
Under the hypotheses of Theorem~\ref{thm:K4-structure}, the sharp upper bound is
\begin{equation}\label{eq:K4-formula}
B_4^+(T, S, p_3, D) = \max_{(j_1, j_2, j_3) \in \mathcal{P}_3(d-1)} \Bigl\{ \Lambda :\ (\Lambda, a, b, c) \in \R^4_{>0} \text{ solves } (\star_{j_1, j_2, j_3})\Bigr\},
\end{equation}
where $\mathcal{P}_3(d-1) := \{(j_1, j_2, j_3) \in \Z^3_{\geq 0} : j_1 + j_2 + j_3 = d-1\}$ and the system $(\star_{j_1, j_2, j_3})$ is
\begin{equation}\label{eq:starj123}
\begin{cases}
\Lambda + j_1 a + j_2 b + j_3 c = T,\\
\Lambda^2 + j_1 a^2 + j_2 b^2 + j_3 c^2 = S,\\
\Lambda^3 + j_1 a^3 + j_2 b^3 + j_3 c^3 = p_3,\\
\Lambda \cdot a^{j_1} \cdot b^{j_2} \cdot c^{j_3} = D.
\end{cases}
\tag{$\star_{j_1, j_2, j_3}$}
\end{equation}
\end{theorem}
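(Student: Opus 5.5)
The plan is to prove the displayed formula~\eqref{eq:K4-formula} by showing two inequalities between $B_4^+$ and the right-hand side, which I denote $M$. The maximum over $\mathcal{P}_3(d-1)$ ranges over a finite index set. For each index, the candidate set of $\Lambda$ values is bounded, since $\Lambda \le T$ by Lemma~\ref{lem:interior}. So the first thing to settle is that $M$ is well defined as a supremum, and that it is attained whenever it coincides with $B_4^+$.

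\emph{Step 1 ($B_4^+ \le M$).} By Lemma~\ref{lem:interior}, $\F_4$ is compact and a maximizer $\lambda^*$ exists in the open orthant. By Theorem~\ref{thm:K4-structure}, after permuting indices $2,\dots,d$ (which leaves every power sum and the product unchanged), $\lambda^*$ has the form~\eqref{eq:K4-spectrum}. Write $(j_1,j_2,j_3)\in\mathcal{P}_3(d-1)$ for the cluster sizes. If fewer than three distinct values occur among $\lambda_2^*,\dots,\lambda_d^*$, I will allow zero multiplicities, or equivalently set the unused cluster values equal to a used one; positivity of $a,b,c$ is then inherited from $\lambda^*$. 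Substituting into the four defining equations of $\F_4$ gives precisely $(\star_{j_1,j_2,j_3})$ with $\Lambda=\lambda_1^*=B_4^+$. Hence $B_4^+$ lies in the candidate set and $B_4^+\le M$.

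\emph{Step 2 ($M \le B_4^+$).} Take any index $(j_1,j_2,j_3)$ and any positive solution $(\Lambda,a,b,c)$ of $(\star_{j_1,j_2,j_3})$. Form the vector $(\Lambda;a^{(j_1)};b^{(j_2)};c^{(j_3)})\in\R^d_{>0}$. The four equations say exactly that this vector lies in $\F_4(T,S,p_3;D)$. Its first coordinate is $\Lambda$, so $\Lambda\le B_4^+$. One point needs care here: the definition $B_4^+=\max\{\lambda_1\}$ does not presuppose any ordering of the coordinates. If the paper instead intends $\lambda_1$ to be the largest entry, the inequality still holds, because $\Lambda\le\max_i\lambda_i\le B_4^+$ once we observe that $\F_4$ is permutation invariant, so maximizing $\lambda_1$ is the same as maximizing $\max_i\lambda_i$. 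Taking the supremum over all solutions and all indices gives $M\le B_4^+$.

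Combining the two steps, $B_4^+=M$, and the maximum in~\eqref{eq:K4-formula} is attained by the solution produced in Step 1. I expect the main obstacle to be bookkeeping rather than analysis. The degenerate partitions with some $j_r=0$ must be allowed, since then the corresponding variable is unconstrained except for positivity and any positive value works. Repeated cluster values must also be handled without breaking the correspondence. Finally, the argument depends on the structural theorem applying to a genuine maximizer, and Lemma~\ref{lem:interior} supplies this. No solution-counting or regularity (LICQ) issue enters beyond what Theorem~\ref{thm:K-cluster} already assumes.
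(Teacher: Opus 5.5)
Your proposal is correct and follows the paper's argument. The paper likewise substitutes the structured maximizer from Theorem~\ref{thm:K4-structure} into the four invariant equations and, conversely, observes that every positive solution of $(\star_{j_1,j_2,j_3})$ yields a feasible spectrum in $\F_4$; your two-inequality version adds the bookkeeping the paper leaves implicit, namely existence of a maximizer via Lemma~\ref{lem:interior}, zero multiplicities in $\mathcal{P}_3(d-1)$, and permutation invariance of $\F_4$.
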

 
\begin{proof}
By Theorem~\ref{thm:K4-structure}, an extremum has the form~\eqref{eq:K4-spectrum}. Substituting into the four invariant equations gives $(\star_{j_1, j_2, j_3})$. Conversely, any solution corresponds to a feasible spectrum, hence a valid extremum candidate. The maximum over all partitions is the sharp bound.
\end{proof}
 
\subsection{Algebraic structure of the polynomial system}
 
For each fixed cluster partition $(j_1, j_2, j_3)$ with all $j_i \geq 1$, the system~\eqref{eq:starj123} consists of four polynomial equations in the four unknowns $(\Lambda, a, b, c)$. Generically the system is zero-dimensional, with a finite number of complex solutions. The relevant ones for our bound are those with $\Lambda, a, b, c \in \R_{>0}$ and $\Lambda > T/d$ (to ensure we are at the upper extremum).
 
\begin{proposition}[Finiteness and solution-count estimate]\label{prop:K4-bezout}
Fix a partition $(j_1, j_2, j_3)$ with $j_i \geq 1$ and $j_1 + j_2 + j_3 = d-1$. The system~\eqref{eq:starj123} is, for invariants outside a proper algebraic (measure-zero) subset, zero-dimensional, and the number of its isolated complex solutions is finite and bounded above by $6d$. Moreover, the solver of Section~\ref{sec:K4-alg} need examine at most $6$ univariate branches per partition. Consequently the number of real positive solutions is finite and the per-partition cost is independent of $d$.
\end{proposition}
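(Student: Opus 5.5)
The plan is to eliminate variables exactly as in the reduction of Section~\ref{sec:three-inv}: first reduce the polynomial part of \eqref{eq:starj123} to a one-parameter family, then count how the determinant equation cuts that family.

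\textbf{Step 1 (elimination of the power-sum equations).} Fix $\Lambda$ and set
\[
A_1 := T-\Lambda,\qquad A_2 := S-\Lambda^2,\qquad A_3 := p_3-\Lambda^3 .
\]
The first three equations become the weighted moment system
\[
\sum_{r} j_r a_r^k = A_k,\qquad k=1,2,3,
\]
in the unknowns $(a,b,c)$. Use the linear equation to write $c=(A_1-j_1a-j_2b)/j_3$. Substituting into the second and third equations leaves a conic and a cubic curve in the $(a,b)$-plane. By Bézout these meet in at most $2\cdot 3=6$ points, provided they share no common component.

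Those $6$ intersection points, regarded as algebraic functions of $\Lambda$, are the ``at most $6$ univariate branches per partition'' $(a_\sigma(\Lambda),b_\sigma(\Lambda),c_\sigma(\Lambda))$, with $\sigma=1,\dots,6$. This is the direct analogue of the two branches $\sigma=\pm1$ of the quadratic~\eqref{eq:quadratic} in Algorithm~\ref{alg:B3plus}.

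\textbf{Step 2 (the determinant equation on each branch).} On each branch the last equation reads
\[
\Phi_\sigma(\Lambda):=\Lambda\,a_\sigma^{j_1}b_\sigma^{j_2}c_\sigma^{j_3}-D=0 .
\]
Via the elimination above, $(\Lambda,a,b)$ satisfy polynomial relations of low degree in $\Lambda$: the conic and cubic have coefficients polynomial in $\Lambda$ of degree at most $3$. The product term has total degree $1+j_1+j_2+j_3=d$. So the full system~\eqref{eq:starj123} is defined by polynomials of degrees
\[
1,\;2,\;3,\;d ,
\]
and Bézout gives the bound $1\cdot 2\cdot 3\cdot d=6d$ on the number of isolated complex solutions. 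To be careful, I would homogenize and count isolated solutions in $\mathbb{P}^4$, or equivalently invoke the affine Bézout bound for isolated solutions, which holds without any genericity assumption.

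\textbf{Step 3 (genericity / zero-dimensionality).} I would show that the map
\[
(\Lambda,a,b,c)\mapsto (\text{the four left-hand sides of } \eqref{eq:starj123})
\]
has generically full-rank Jacobian. At a point with $\Lambda,a,b,c$ distinct and nonzero, the Jacobian is the Vandermonde-like matrix with rows $(j_r)$, $(2j_ra_r)$, $(3j_ra_r^2)$, $(P/a_r)$, where $P$ is the product. Its determinant is a nonzero multiple of $\prod(a_r-a_s)$ times a nonvanishing factor, by the same power-of-$\lambda$ argument as in \eqref{eq:K-poly}. Hence the map is dominant. By the algebraic Sard theorem, or generic smoothness, fibers over invariants outside a proper Zariski-closed set are finite and reduced. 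This yields the zero-dimensionality claim and finiteness of real positive solutions.

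\textbf{Step 4 (cost).} The per-partition cost independent of $d$ then follows: the solver processes a fixed number ($\le 6$) of branches, each by a bisection or Newton solve. This matches the $\mathcal{O}(\log(1/\varepsilon))$ reasoning already used for $B_3^+$, counted in evaluations per branch.

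\textbf{Main obstacle.} I expect the hard step to be excluding common components and degeneracies in Step~1 and Step~3. The conic and cubic might share a component for special $(j_1,j_2,j_3)$, and the Jacobian determinant must be shown not to vanish identically. My first attempt would be to exhibit one explicit point, for example a concrete spectrum with four distinct values, where the Jacobian is nonsingular. That single point suffices for dominance, and therefore for the generic statement.
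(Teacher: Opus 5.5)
Your proposal is correct and follows the paper's own argument: the $6d$ count is Bézout applied to equations of degrees $1,2,3,d$, and the six branches come from eliminating $c$ with the linear equation so that, for fixed $\Lambda$, a conic meets a cubic in at most $2\cdot 3=6$ points. Your Step~3 goes further than the paper, which only asserts that proper intersection fails on a proper algebraic subset. Up to sign and the factor $6\,j_1j_2j_3\,a^{j_1-1}b^{j_2-1}c^{j_3-1}$, the Jacobian is the Vandermonde determinant of $(\Lambda,a,b,c)$, so the invariant map is dominant with generically finite fibers. The same computation for $(a,b,c)\mapsto\bigl(\sum_r j_r a_r^k\bigr)_{k=1}^{3}$ (Jacobian $6\,j_1j_2j_3$ times the Vandermonde of $(a,b,c)$) settles your common-component concern for all but finitely many $\Lambda$ once $(T,S,p_3)$ is generic.
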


\begin{proof}
The three power-sum equations of~\eqref{eq:starj123} are polynomials in $(\Lambda, a, b, c)$ of total degrees $1$, $2$, $3$, and the determinant equation $\Lambda\, a^{j_1} b^{j_2} c^{j_3} = D$ is a polynomial of total degree $1 + j_1 + j_2 + j_3 = d$. By Bézout's theorem, if the system is zero-dimensional then its number of isolated solutions in $\mathbb{P}^4(\C)$, counted with multiplicity, is at most the product of the degrees $1\cdot 2\cdot 3\cdot d = 6d$; zero-dimensionality holds whenever the four hypersurfaces meet properly, which fails only on a proper algebraic subset of invariant space. This is the rigorous finiteness statement we use.

For the algorithm we do not need the exact count, only an upper bound on the number of branches to follow, which we now give constructively. The first equation is linear and the second quadratic in $(a, b, c)$ for fixed $\Lambda$; eliminating one variable produces a single quadratic, and combined with the cubic third equation the triple $(a,b,c)$ is determined as one of at most $1\cdot 2\cdot 3 = 6$ algebraic branches parameterized by $\Lambda$. Substituting each branch into the determinant equation yields a univariate equation in $\Lambda$, solved numerically. Hence at most six univariate branches are examined per ordered partition, each contributing finitely many real positive roots. We state the bound as a complexity estimate, an upper bound on the work the solver performs rather than as a sharp enumerative count, since the latter would require a multihomogeneous Bézout or BKK (Bernstein--Kushnirenko--Khovanskii) analysis tracking the monomial structure of the determinant equation, which is not needed for the algorithm. In all experiments the number of real positive solutions per partition was between $1$ and $6$.
\end{proof}
 
\subsection{Reduction to unordered partitions}
 
Since the cluster labels $(a, b, c)$ are interchangeable, any solution found at ordered partition $(j_1, j_2, j_3)$ has a corresponding solution at the relabeled partition $(j_{\sigma(1)}, j_{\sigma(2)}, j_{\sigma(3)})$ for any permutation $\sigma$. To avoid redundancy, it suffices to enumerate \emph{unordered} partitions $\{j_1, j_2, j_3\}$ of $d-1$ into at most three positive parts:
\[
\mathcal{P}_3^{\text{unord}}(d-1) := \bigl\{ \{j_1, j_2, j_3\} \in \Z^3_{\geq 1} : j_1 \leq j_2 \leq j_3,\ j_1 + j_2 + j_3 = d-1 \bigr\},
\]
together with the trivial cases of two or fewer distinct values (which reduce to the lower-$K$ sharp bounds).
 
For example, with $d = 6$ (so $d-1 = 5$), the unordered partitions into three positive parts are
\[
\mathcal{P}_3^{\text{unord}}(5) = \bigl\{\{1, 1, 3\},\ \{1, 2, 2\}\bigr\},
\]
giving \emph{two} cases to investigate.
 
\subsection{Algorithm and complexity}\label{sec:K4-alg}
 
Algorithm~\ref{alg:B4plus} computes $B_4^+$ by solving the four-cluster system over all partitions. Because the four-invariant feasibility region is contained in the three-invariant one, every four-cluster solution satisfies $\Lambda \leq B_3^+$; the value $B_3^+$ is therefore retained only as a fallback upper bound in the degenerate situation where no four-cluster solution with all $j_i \geq 1$ exists (in which case the optimizer has at most three distinct values and is found by the lower-$K$ routine). The reported $B_4^+$ is the maximum $\Lambda$ over all genuine four-cluster solutions and these degenerate lower-cluster solutions.
 
\begin{algorithm}[ht]
\caption{Sharp four-invariant upper bound $B_4^+$}
\label{alg:B4plus}
\begin{algorithmic}[1]
\Require Invariants $T, S, p_3, D > 0$; tensor parameters $m, n$; tolerance $\varepsilon > 0$.
\State Compute $d \gets n(m-1)^{n-1}$.
\State $\Lambda^* \gets 0$ \Comment{will hold the max over four-cluster solutions}
\For{each partition $\{j_1, j_2, j_3\} \in \mathcal{P}_3^{\text{unord}}(d-1)$}
    \For{each ordered representative $(j_1, j_2, j_3)$}
        \State $\mathcal{S}_j \gets \emptyset$ \Comment{set of solutions found}
        \For{$\ell = 1, \ldots, N_{\text{starts}}$} \Comment{multistart Newton}
            \State $x_0 \gets$ random point in $[\varepsilon, T]^4$ with $x_0[0] \in [T/d, T - \varepsilon]$
            \State $(\Lambda, a, b, c) \gets$ Newton$(\star_{j_1, j_2, j_3}, x_0)$
            \If{convergence \textbf{and} all components $> \varepsilon$ \textbf{and} residual $< \varepsilon$}
                \State Add to $\mathcal{S}_j$ if not duplicate
            \EndIf
        \EndFor
        \State $\Lambda^* \gets \max\bigl(\Lambda^*,\ \max_{(\Lambda,*) \in \mathcal{S}_j} \Lambda\bigr)$
    \EndFor
\EndFor
\If{$\Lambda^* = 0$} \Comment{no genuine four-cluster solution}
    \State $\Lambda^* \gets B_3^+(T, S, D)$ \Comment{optimizer has $\leq 3$ distinct values}
\EndIf
\State \Return $\Lambda^*$
\end{algorithmic}
\end{algorithm}
 
\paragraph{Number of partitions.} The number of unordered partitions of $d-1$ into at most three positive parts is $|\mathcal{P}_3^{\text{unord}}(d-1)| = p_3(d-1) = \mathcal{O}(d^2)$ (e.g.\ $2, 7, 30, 200, 817$ for $d = 6, 10, 20, 50, 100$, matching $d^2/12$). Each partition gives at most six ordered representatives.
 
\paragraph{Per-start cost.} For a fixed ordered partition, each multistart point runs Newton's method on the four-equation system $(\star_{j_1,j_2,j_3})$. A single Newton iteration requires: (a) one residual evaluation, dominated by the determinant equation's powers $a^{j_1} b^{j_2} c^{j_3}$, computable in $\mathcal{O}(\log d)$ via repeated squaring (or $\mathcal{O}(d)$ naively); (b) assembly of the $4\times 4$ Jacobian, whose entries involve the same powers and their logarithmic derivatives, also $\mathcal{O}(\log d)$ each, hence $\mathcal{O}(\log d)$ total for the constant-size matrix; and (c) the solution of one $4\times 4$ linear system, a fixed $\mathcal{O}(1)$ cost (at most $4^3$ operations). Newton's local quadratic convergence reaches tolerance $\varepsilon$ in $\mathcal{O}(\log\log(1/\varepsilon))$ iterations once inside the basin; we cap the count at a fixed $I_{\max}$ to bound divergent starts. Thus each start costs $\mathcal{O}(I_{\max}\log d)$.
 
\paragraph{Duplicate filtering.} Solutions found from different starts are deduplicated by comparing against the set $\mathcal{S}_j$ already retained for the current partition. By Proposition~\ref{prop:K4-bezout} this set has at most six elements, so each insertion costs $\mathcal{O}(1)$ comparisons and the filtering is negligible relative to the Newton solves.
 
\paragraph{Total complexity.} Combining the $\mathcal{O}(d^2)$ partitions (each with up to six ordered representatives), $N_{\text{starts}}$ starts per representative, and the $\mathcal{O}(I_{\max}\log d)$ per-start cost, the total is
\[
\mathcal{O}\bigl(d^2 \cdot N_{\text{starts}} \cdot I_{\max} \cdot \log d\bigr),
\]
with $N_{\text{starts}} = 100$--$500$ and $I_{\max}$ a small constant (e.g.\ $50$) sufficient in practice. The cost is independent of the tensor order $m$ and dimension $n$ once the invariants are available; the partition loop is embarrassingly parallel across partitions and starts.
 
\subsection{Worked example}
 
We compute $B_4^+$ for the synthetic spectrum $(5, 4, 3, 2, 1, 1)$, with $T = 16$, $S = 56$, $p_3 = 226$, $D = 120$, $d = 6$. The unordered partitions are $\{1,1,3\}$ and $\{1,2,2\}$. The system $(\star_{1,1,3})$ admits no real positive solution with $\Lambda > T/d$, while $(\star_{1,2,2})$ admits four, with $\Lambda$ values $\{2.794, 4.368, 5.046, 5.073\}$. The maximum is $\Lambda^* = 5.073$, attained at $(5.073;\, 3.513, 3.513;\, 1.884;\, 1.009, 1.009)$, i.e.\ the ordered representative $(j_1, j_2, j_3) = (2, 1, 2)$ with $a = 3.513$ (multiplicity $2$), $b = 1.884$ (multiplicity $1$), $c = 1.009$ (multiplicity $2$), satisfying all four invariant equations exactly ($16.000$, $56.000$, $226.000$, $120.000$). Hence $B_4^+ = 5.073$, compared with $B_3^+ = 5.643$ and $\lambda_{\max} = 5.000$: the relative overestimation gap drops from $12.9\%$ at $K = 3$ to $1.5\%$ at $K = 4$.
 
\subsection{Sharpness: when is the four-invariant bound exact?}
 
The next theorem characterizes precisely the tensors for which $B_4^+$ matches the true spectral radius.
 
\begin{theorem}\label{thm:K4-sharpness}
Let $\A$ be a symmetric positive definite tensor with real H-spectrum and invariants $(T, S, p_3, D)$. Then:
\begin{enumerate}
\item[(i)] (\emph{Validity, unconditional}) $B_4^+(T, S, p_3, D) \geq \lambda_{\max}(\A)$.
\item[(ii)] (\emph{Necessity, unconditional}) If $B_4^+(T, S, p_3, D) = \lambda_{\max}(\A)$, then the H-spectrum of $\A$ takes at most four distinct values.
\item[(iii)] (\emph{Sufficiency, under uniqueness}) If the H-spectrum of $\A$ takes at most four distinct values \emph{and} $\lambda(\A)$ is the unique maximizer of $\lambda_1$ over $\F_4$, then $B_4^+(T, S, p_3, D) = \lambda_{\max}(\A)$.
\end{enumerate}
\end{theorem}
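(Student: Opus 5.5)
Parts (i) and (iii) follow from feasibility of the true spectrum; part (ii) rests on Theorem~\ref{thm:K4-structure}. Throughout, write $\lambda(\A) = (\lambda_1, \ldots, \lambda_d)$ for the H-spectrum sorted in decreasing order. Since the spectrum is real and $\A$ is positive definite, all entries are positive, so $\lambda(\A) \in \R^d_{>0}$.

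For (i), we check that $\lambda(\A)$ satisfies the four defining equations of $\F_4(T, S, p_3; D)$, which holds by the definitions of $T$, $S$, $p_3$, $D$ in Section~\ref{sec:prelim}. Hence $\lambda(\A) \in \F_4$. Lemma~\ref{lem:interior} guarantees that the maximum defining $B_4^+$ is attained, and therefore $B_4^+ \geq \lambda_1(\A) = \lambda_{\max}(\A)$. This is the same argument as in Corollary~\ref{cor:hierarchy}.

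For (ii), assume $B_4^+ = \lambda_{\max}(\A)$. The feasible point $\lambda(\A)$ then attains the value $B_4^+$ of the objective $\lambda_1$, so it is itself a maximizer of $\lambda_1$ over $\F_4$. By Lemma~\ref{lem:interior} it lies in the open orthant, and Theorem~\ref{thm:K4-structure} (that is, Theorem~\ref{thm:K-cluster} with $K = 4$) applies directly: the coordinates $\lambda_2, \ldots, \lambda_d$ are roots of the cubic \eqref{eq:K-poly}, so together with $\lambda_1$ the spectrum takes at most four distinct values.

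The main obstacle is the justification of the Lagrange conditions in Theorem~\ref{thm:K-cluster}. Interiority removes the inequality multipliers, but the Lagrange multiplier rule still requires a constraint qualification. A plausible approach is to argue that the gradients of the constraints with respect to $(\lambda_2, \ldots, \lambda_d)$, namely $(1, 2\lambda_i, 3\lambda_i^2, 1/\lambda_i)_{i \geq 2}$, span a Vandermonde-type system. These are linearly independent whenever $\lambda_2, \ldots, \lambda_d$ take at least four distinct values. If they take fewer than four distinct values, then the whole spectrum has at most four distinct values and the conclusion holds trivially. The LICQ is therefore needed only in the case where it is automatically satisfied. I would make this case split explicit, and if the paper's theorem is taken as given, simply cite it.

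For (iii), assume $\lambda(\A)$ is the unique maximizer of $\lambda_1$ over $\F_4$. Then by definition $B_4^+ = \lambda_1(\A) = \lambda_{\max}(\A)$. The four-cluster hypothesis is consistent with Theorem~\ref{thm:K4-system}: it ensures that the optimum appears among the solutions of some $(\star_{j_1,j_2,j_3})$, so Algorithm~\ref{alg:B4plus} returns this value. The equality itself, however, follows from the maximizer assumption alone.
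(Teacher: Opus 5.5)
Your proposal is correct and follows the paper's proof essentially step for step: (i) uses feasibility of $\lambda(\A)$ in $\F_4$; (ii) notes that $\lambda(\A)$ is then itself a maximizer, so Theorem~\ref{thm:K4-structure} applies; and (iii) is immediate from the maximizer hypothesis, exactly as in the paper. Your extra case split on the constraint qualification is valid: the Vandermonde-type gradients are linearly independent precisely when $\lambda_2,\ldots,\lambda_d$ take at least four distinct values, and otherwise the conclusion holds trivially. This fills a point the paper's structural theorem passes over, since Lemma~\ref{lem:interior} only removes the inequality multipliers and does not by itself justify the Lagrange multiplier rule.
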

 
\begin{proof}
(i) The actual spectrum $\lambda(\A)$ lies in $\F_4(T, S, p_3, D)$, since by construction it satisfies all four invariant constraints. Hence $B_4^+ = \max_{\lambda \in \F_4} \lambda_1 \geq \lambda_1(\A) = \lambda_{\max}(\A)$.

(ii) Suppose $B_4^+ = \lambda_{\max}(\A)$. Since $\lambda(\A) \in \F_4$ and $\lambda_1(\A) = \lambda_{\max}(\A) = B_4^+ = \max_{\lambda \in \F_4}\lambda_1$, the spectrum $\lambda(\A)$ \emph{attains} the maximum, i.e.\ it is a maximizer. By the structural Theorem~\ref{thm:K4-structure}, every maximizer takes at most four distinct values; hence so does $\lambda(\A)$. Note that this direction requires no genericity or uniqueness hypothesis.

(iii) If $\A$ has at most four distinct H-eigenvalues, then $\lambda(\A)$ has the parametric form~\eqref{eq:K4-spectrum} and is therefore a feasible candidate. If, in addition, $\lambda(\A)$ is the unique maximizer of $\lambda_1$ over $\F_4$, then $B_4^+ = \lambda_1(\A) = \lambda_{\max}(\A)$.
\end{proof}

\begin{remark}[On the uniqueness hypothesis]\label{rem:uniqueness}
Only the sufficiency direction (iii) invokes uniqueness, and the interpretation when it fails is benign. By (i), $B_4^+ \geq \lambda_{\max}(\A)$ \emph{always} holds, so the bound remains valid regardless of uniqueness; uniqueness affects only whether it is \emph{tight}. Two situations can break sufficiency. First, $\A$ may have at most four distinct values yet not be the maximizer, because a different feasible spectrum $\mu \in \F_4$ with the same four invariants has $\mu_1 > \lambda_1(\A)$; then $B_4^+ > \lambda_{\max}(\A)$ even though $\A$ is ``low-rank'' in spectrum. Second, several maximizers may coexist. Both are non-generic: the map sending a four-cluster spectrum (for a fixed partition, four continuous parameters $\Lambda, a, b, c$) to its four invariants $(T, S, p_3, D)$ is, by a dimension count, generically locally invertible, so for invariants outside a measure-zero set the preimage is discrete and the maximizer is unique. The contrapositive of (ii), if $\A$ has at least five distinct H-eigenvalues then $B_4^+ > \lambda_{\max}(\A)$ strictly holds unconditionally and is the form used in practice to certify that the four-invariant bound cannot be exact for a high-complexity spectrum.
\end{remark}
 
\begin{remark}\label{rem:degenerate-sharpness}
The phrase ``at most four distinct values'' includes the degenerate cases of three, two, or one distinct value(s). In the degenerate case of two distinct values (Example~\ref{ex:K4-sharp} below), the maximizer of~\eqref{eq:K4-formula} is realized as a limit point of the four-cluster family with one or more cluster values merging. The Newton solver finds this limit numerically, as illustrated below.
\end{remark}
 
\begin{example}[Sharpness on Example 4.1]\label{ex:K4-sharp}
Consider Example 4.1 of~\cite{nayak2026}: the diagonal tensor with $V(x) = 1.1 x_1^4 + x_2^4$, having H-spectrum $\{1.1, 1.0\}$ with multiplicity 3 each. The invariants are $T = 6.3, S = 6.63, p_3 = 6.993, D = 1.331$. Applying Algorithm~\ref{alg:B4plus}:
\[
B_4^+(6.3,\ 6.63,\ 6.993,\ 1.331) = 1.10000 \quad (\text{numerically}).
\]
This matches $\lambda_{\max}$ \emph{exactly} (to numerical precision), confirming Theorem~\ref{thm:K4-sharpness}: the spectrum has only two distinct values, well within the four-cluster bound.
 
The sharpness mechanism is interesting: although the two-cluster spectrum does not at first glance fit the four-cluster parameterization~\eqref{eq:K4-spectrum}, the polynomial system admits limit solutions where two of the cluster values coincide (e.g., $b \to a$), producing the actual spectrum as a degenerate point of the four-cluster family.
\end{example}
 
\begin{example}[Strict sharpness on a four-cluster spectrum]\label{ex:K4-strict}
Consider the synthetic spectrum $\{4, 3, 2, 1, 1, 1\}$ with four distinct values and multiplicities $(1, 1, 1, 3)$. The invariants are $T = 12,\ S = 32,\ p_3 = 102,\ D = 24$. The bounds satisfy
\[
B_3^+(12, 32, 24) \approx 4.393, \qquad B_4^+(12, 32, 102, 24) = 4.000 = \lambda_{\max} \;\;(\text{exactly}).
\]
Here the $K=4$ bound captures the spectrum exactly while the $K=3$ bound is loose, demonstrating that the third-order trace $p_3$ provides genuinely new information.
\end{example}
 
\subsection{Comparison summary across the hierarchy}
 
Table~\ref{tab:K4-summary} consolidates the comparison of the four-invariant bound with previous bounds across the test cases.
 
\begin{table}[ht]
\centering
\begin{tabular}{lccccc}
\toprule
Spectrum & Distinct values & AM--GM & $B_2^+$ & $B_3^+$ & $B_4^+$\\
\midrule
$\{1.1^{(3)}, 1.0^{(3)}\}$ (Example 4.1) & 2 & 2.636 & 1.165 & 1.138 & \textbf{1.100}*\\
$\{4, 3, 2, 1^{(3)}\}$ & 4 & 6.257 & 5.215 & 4.393 & \textbf{4.000}*\\
$\{5, 4, 3, 2, 1^{(2)}\}$ & 5 & 8.519 & 7.231 & 5.643 & 5.073\\
\bottomrule
\end{tabular}
\caption{Comparison across the hierarchy. Cells marked $*$ indicate the bound is sharp (matches $\lambda_{\max}$ exactly). The $K=4$ bound is exactly sharp on spectra with at most four distinct values, as predicted by Theorem~\ref{thm:K4-sharpness}.}
\label{tab:K4-summary}
\end{table}
 
\subsection{Relation to certified eigenvalue solvers}

It is worth situating the bounds against direct eigenvalue computation, since the two serve complementary roles. In the matrix case ($m = 2$), the H-spectrum coincides with the ordinary spectrum and is computed exactly and extremely fast by optimized symmetric eigensolvers (e.g.\ LAPACK's divide-and-conquer routine), against which a bound cannot and need not compete on accuracy: for matrices the bounds are best viewed as analytic certificates derivable from $(T, S, D)$ alone useful when only these invariants are available or when a guaranteed envelope (rather than the exact value) is required, as in the perturbation and robust-stability settings of Sections~\ref{sec:perturbation} and~\ref{sec:lyapunov}.

The situation changes for genuine higher-order tensors ($m \geq 4$), where no analogue of LAPACK exists. There, the H-eigenpairs are computed either by the shifted symmetric higher-order power method~\cite{kolda2011}, which converges to \emph{a} stationary eigenpair, requires multiple random restarts, and offers no guarantee of having found the global $\lambda_{\max}$ or by polynomial homotopy/numerical-algebraic-geometry solvers, whose path count grows with the Bézout number and hence steeply in $d, n, m$. Against these, the present bounds have two structural advantages: their cost is independent of $m$ and $n$ once the invariants are known (Sections~\ref{sec:runtime},~\ref{sec:invariant-times}), and they return a \emph{guaranteed} upper bound rather than an unverified local optimum. So $B_3^+$ and $B_4^+$ are not meant to replace full H-eigenvalue computation; they work well alongside it. They are most useful when you only need $\lambda_{\max}$ or $\lambda_{\min}$, for example to do a quick first check, to confirm stability before a fuller analysis, or to give another solver a guaranteed starting range to work within.
 
\section{Closed-form bounds for small dimensions}\label{sec:closed-form}
 
The polynomial $\phi_{T,D}$ in Theorem~\ref{thm:sharp2} is solvable in closed form for $d \in \{2, 3, 4\}$. For $d = 2$, $\phi_{T,D}(\Lambda) = -\Lambda^2 + T\Lambda - D$ has roots $\Lambda^\pm = (T \pm \sqrt{T^2 - 4D})/2$, the exact $2\times 2$ eigenvalue formula. For $d = 3$, $\phi_{T,D}(\Lambda) = \Lambda(T - \Lambda)^2 - 4D$; the substitution $\Lambda = y + 2T/3$ depresses it to $y^3 - (T^2/3)y + (2T^3/27 - 4D) = 0$, whose three real roots (the discriminant condition reduces to the admissible $D \leq (T/3)^3$) are
\begin{equation}\label{eq:cubic-roots}
\Lambda_k = \frac{2T}{3}\Bigl[1 + \cos\!\Bigl(\tfrac{1}{3}\arccos\bigl(\tfrac{54 D}{T^3} - 1\bigr) - \tfrac{2\pi k}{3}\Bigr)\Bigr], \quad k = 0, 1, 2,
\end{equation}
with $\Lambda_1 \in (T/3, T)$ the sharp upper bound $B_2^+$ and $\Lambda_2 \in (0, T/3)$ the sharp lower bound $B_2^-$. For $d = 4$, $\phi_{T,D}(\Lambda) = \Lambda(T - \Lambda)^3 - 27D$ is a quartic solvable by Ferrari's method (depress via $\Lambda = y + 3T/4$, solve the resolvent cubic, and factor into two quadratics); the two real roots in $(0, T)$ are $B_2^\pm$. Finally, for a symmetric $n \times n$ matrix ($m = 2$, $d = n$), Theorem~\ref{thm:sharp2} reduces to the largest root of $\Lambda(T - \Lambda)^{n-1} = D(n-1)^{n-1}$, precisely the Merikoski--Virtanen bound~\cite[Theorem 2]{merikoski1997}; hence Theorem~\ref{thm:sharp2} is its natural tensor generalization.
 
\section{Comparison with classical matrix bounds}\label{sec:classical}
 
Wolkowicz and Styan~\cite{wolkowicz1980} bound the extremal matrix eigenvalues using the trace and Frobenius norm, $T = \tr(A)$ and $S = \|A\|_F^2$. The tensor generalization, via Cauchy--Schwarz on the eigenvalue vector (with $d$ replacing $n$), is
\begin{equation}\label{eq:WS-tensor}
\lambda_{\max} \leq \frac{T}{d} + \sqrt{\frac{d-1}{d}\Bigl(S - \frac{T^2}{d}\Bigr)} =: B_{WS}^+(T, S),
\end{equation}
sharp on the two-cluster spectrum. Our three-invariant bound dominates it.
 
\begin{proposition}\label{prop:WS-vs-our}
For all admissible $(T, S, D)$, $B_3^+(T, S, D) \leq B_{WS}^+(T, S)$.
\end{proposition}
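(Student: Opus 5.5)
The plan is to reduce the claim to a comparison of feasibility regions. The bound $B_{WS}^+(T,S)$ is exactly the maximum of $\lambda_1$ over the larger set obtained by keeping only the two power-sum constraints. Define
\[
\mathcal{G}(T,S) := \Bigl\{\lambda \in \R^d : \textstyle\sum_i \lambda_i = T,\ \sum_i \lambda_i^2 = S\Bigr\}.
\]
Dropping the determinant constraint and the positivity requirement gives $\F_3(T,S;D) \subseteq \mathcal{G}(T,S)$. Hence
\[
B_3^+(T,S,D) = \max_{\F_3}\lambda_1 \leq \sup_{\mathcal{G}}\lambda_1 .
\]
This maximum over $\F_3$ is attained by Lemma~\ref{lem:interior} whenever $\F_3$ is nonempty, which is what "admissible" means.

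It then remains to show $\sup_{\mathcal{G}}\lambda_1 \le B_{WS}^+(T,S)$, and I will do this with Cauchy--Schwarz. For $\lambda\in\mathcal{G}$, set $\Lambda=\lambda_1$. The remaining $d-1$ entries satisfy
\[
\sum_{i\ge2}\lambda_i = T-\Lambda, \qquad \sum_{i\ge2}\lambda_i^2 = S-\Lambda^2 .
\]
Cauchy--Schwarz gives $(T-\Lambda)^2 \le (d-1)(S-\Lambda^2)$. Expanding, this is the quadratic inequality
\[
d\Lambda^2 - 2T\Lambda + T^2 - (d-1)S \le 0 .
\]
So $\Lambda$ is at most the larger root,
\[
\frac{T + \sqrt{T^2 - d\,(T^2-(d-1)S)}}{d}
= \frac{T}{d} + \frac{1}{d}\sqrt{(d-1)(dS - T^2)} .
\]
A short algebraic check shows this equals $T/d + \sqrt{\tfrac{d-1}{d}\bigl(S - T^2/d\bigr)}$, which is exactly \eqref{eq:WS-tensor}. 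The discriminant is nonnegative because $S \ge T^2/d$, again by Cauchy--Schwarz, and this holds for any feasible spectrum.

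Chaining the two inequalities gives $B_3^+ \le B_{WS}^+$.

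I also want to note when the inequality is strict. Equality in Cauchy--Schwarz forces the two-cluster spectrum $(\Lambda, c,\dots,c)$. That spectrum lies in $\F_3$ only if its product equals $D$, so generically the inequality is strict. Equality holds precisely when $D$ is compatible with that two-cluster spectrum, which is consistent with Remark~\ref{rem:two-cluster-special}.

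The main step to be careful about is the algebraic identification of the root with the stated form of $B_{WS}^+$, together with nonemptiness, so that the maximum defining $B_3^+$ is meaningful. Everything else is set inclusion, and I expect no real obstacle.
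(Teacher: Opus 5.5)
Your proof is correct and takes the same route as the paper: $\F_3(T,S;D)$ lies inside the region defined by the trace and second power-sum constraints, and $\lambda_1 \le B_{WS}^+$ on that region. The only difference is that you prove this bound explicitly via Cauchy--Schwarz, dropping positivity so the inclusion is immediate, whereas the paper simply asserts that $B_{WS}^+$ is the maximum over the larger region.
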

 
\begin{proof}
$B_{WS}^+(T, S)$ is the maximum of $\lambda_1$ over the larger region $\{\lambda : \sum \lambda_i = T,\ \sum \lambda_i^2 = S,\ \lambda_i > 0\}$. Adding $\prod \lambda_i = D$ shrinks this to $\F_3(T, S, D)$, so $B_3^+ \leq B_{WS}^+$.
\end{proof}
 
For the synthetic spectrum $(5, 4, 3, 2, 1, 1)$, $B_{WS}^+ = \tfrac{8}{3} + \sqrt{\tfrac{5}{6}\cdot\tfrac{80}{6}} \approx 6.000$ while $B_3^+ \approx 5.643$, a $6\%$ improvement from the determinant constraint.
 
\section{Computability of the higher-order invariants}\label{sec:computability}
 
The trace $T = (m-1)^{n-1}\tr(\A)$ is a diagonal sum ($\mathcal{O}(n)$). The determinant $D$ is the resultant of $\A x^{m-1} = 0$, computable in principle but exponentially expensive in tensor order. The power sums are computed by the Hu--Huang--Ling--Qi trace formula~\cite[Theorem 4.1]{hu2013}, which expresses $\operatorname{Tr}_k(\A) = \sum_i \lambda_i^k = p_k(\A)$ as a polynomial in the entries via a sum over rooted closed walks of length $k$; for $k = 2$ it is a quadratic form
\begin{equation}\label{eq:p2-formula}
p_2(\A) = (m-1)^{n-1} \sum_{i_1, \ldots, i_{m}, j_1, \ldots, j_m} \mathbb{C}_{i_1 \cdots i_m, j_1 \cdots j_m}\, a_{i_1 \cdots i_m}\, a_{j_1 \cdots j_m},
\end{equation}
with explicit combinatorial coefficients $\mathbb{C}$~\cite[Algorithm 4.5]{hu2013}. For diagonal tensors $p_k = (m-1)^{n-1} \sum_i a_{i \cdots i}^k$; for general symmetric tensors $p_2$ costs $\mathcal{O}(n^m)$ operations, much cheaper than the determinant ($\mathcal{O}(n^{(m-1)^n})$ or worse).

\begin{remark}\label{rem:S-upper}
Replacing $S$ by an upper bound $\widehat{S} \geq S$ enlarges $\F_3$; hence the sharp bound on $\lambda_1$ over $\F_3(T, \widehat{S}, D)$ \emph{still upper-bounds} $\lambda_{\max}$. The framework is thus robust to approximate $S$ (e.g.\ via the Frobenius norm) at the cost of slightly looser bounds.
\end{remark}

\subsection{Measured computation times}\label{sec:invariant-times}

To make the cost discussion concrete, Table~\ref{tab:invariant-times} reports wall-clock times for computing the invariants directly from tensor entries, on a commodity single-threaded machine. Two regimes are shown. For \emph{diagonal} tensors, the genuine higher-order class of Section~\ref{sec:genuine}, all invariants are exact $\mathcal{O}(n)$ computations ($p_k = (m-1)^{n-1}\sum_i a_{i\cdots i}^k$ and $\log D = (m-1)^{n-1}\sum_i \log a_{i\cdots i}$), costing a few microseconds even when the eigenvalue count $d$ is astronomically large. For \emph{dense} symmetric tensors, $T$ is the $\mathcal{O}(n)$ diagonal sum, while $S = p_2$ and $p_3$ require a pass over the $\mathcal{O}(n^m)$ entries via the trace formula~\eqref{eq:p2-formula}; these remain in the tens-of-microseconds range up to thousands of stored entries.

\begin{table}[ht]
\centering
\begin{tabular}{llcccc}
\toprule
Regime & $(n, m)$ & $d$ & $t(T)$ & $t(S)$ & $t(p_3)$\\
\midrule
Diagonal (exact, $\mathcal{O}(n)$) & $(10, 4)$ & $1.97\times 10^5$ & $2.3\,\mu$s & $2.7\,\mu$s & $3.1\,\mu$s\\
Diagonal (exact, $\mathcal{O}(n)$) & $(100, 6)$ & $1.58\times 10^{71}$ & $2.3\,\mu$s & $2.8\,\mu$s & $3.5\,\mu$s\\
Diagonal (exact, $\mathcal{O}(n)$) & $(500, 4)$ & $6.06\times 10^{240}$ & $2.4\,\mu$s & $3.0\,\mu$s & $5.1\,\mu$s\\
Dense ($\mathcal{O}(n^m)$ pass) & $(8, 4)$ & $1.75\times 10^4$ & $2.4\,\mu$s & $8.5\,\mu$s & $9.3\,\mu$s\\
Dense ($\mathcal{O}(n^m)$ pass) & $(10, 4)$ & $1.97\times 10^5$ & $2.8\,\mu$s & $8.2\,\mu$s & $13.7\,\mu$s\\
Dense ($\mathcal{O}(n^m)$ pass) & $(6, 6)$ & $1.88\times 10^4$ & $3.9\,\mu$s & $35.5\,\mu$s & $86.0\,\mu$s\\
\bottomrule
\end{tabular}
\caption{Measured wall-clock times for computing the cheap invariants $T$, $S$, and $p_3$ from tensor entries (single-threaded, commodity hardware; means over repeated trials). For diagonal tensors all invariants are exact and $\mathcal{O}(n)$, independent of the eigenvalue count $d$; for dense tensors the power sums cost an $\mathcal{O}(n^m)$ entry pass. The determinant $D$ is the only expensive invariant, with resultant cost $\mathcal{O}(n^{(m-1)^n})$ (e.g.\ a degree-$6$ resultant for $(n,m)=(2,4)$ evaluates in $\approx 38\,\mu$s, but the exponent grows quickly); by Theorem~\ref{thm:perturbation} the bound is highly insensitive to errors in $D$, so an approximate determinant is sufficient in practice.}
\label{tab:invariant-times}
\end{table}

These measurements confirm the practical picture: the power sums needed for the three- and four-invariant bounds are cheap relative to the determinant, and crucially the perturbation analysis of Section~\ref{sec:perturbation} shows the bound tolerates substantial relative error in the one expensive invariant. The bound computation itself (Section~\ref{sec:runtime}) is likewise inexpensive, so the end-to-end cost of certifying $\lambda_{\max}$ is dominated by whatever (possibly approximate) determinant routine is used.
 
\section{Perturbation analysis}\label{sec:perturbation}
 
A natural concern in practice is the sensitivity of the bounds to errors in the computed invariants $(T, S, D)$. We characterize this sensitivity quantitatively.
 
\begin{theorem}\label{thm:perturbation}
Let $B_3^+(T, S, D)$ denote the sharp three-invariant upper bound. At any non-degenerate point $(T, S, D)$ where the optimum in~\eqref{eq:sharp3-formula} is attained at an interior point with cluster index $j \in \{1, \ldots, d-2\}$, the gradient of $B_3^+$ exists and is given by
\begin{equation}\label{eq:gradient}
\nabla B_3^+(T, S, D) = -\frac{1}{\partial_\Lambda \Phi(\Lambda^*)} \bigl( \partial_T \Phi, \partial_S \Phi, \partial_D \Phi \bigr)\Big|_{\Lambda = \Lambda^*},
\end{equation}
where $\Phi(\Lambda; T, S, D) = \Lambda \cdot a^j(\Lambda; T, S) \cdot b^{d-1-j}(\Lambda; T, S) - D$ is the implicit equation defining $\Lambda^* = B_3^+$, and $a, b$ are the explicit functions of $\Lambda$ from the quadratic~\eqref{eq:quadratic}.
\end{theorem}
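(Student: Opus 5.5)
The argument is an application of the implicit function theorem to the scalar equation $\Phi(\Lambda; T, S, D) = 0$ that defines $\Lambda^*$. Before applying it, we must check that $B_3^+$ coincides locally with one smooth root branch.

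\textbf{Step 1: local smoothness of $\Phi$.} Fix the optimal cluster index $j \in \{1,\ldots,d-2\}$ and the branch sign $\sigma$ realizing the maximum in~\eqref{eq:sharp3-formula}, as in Algorithm~\ref{alg:B3plus}. Non-degeneracy means three things at $\Lambda^*$:
\begin{itemize}
\item[(a)] the discriminant satisfies $\Delta_j(\Lambda^*) > 0$;
\item[(b)] the cluster values are strictly positive, $a^*, b^* > 0$;
\item[(c)] $\partial_\Lambda \Phi(\Lambda^*) \neq 0$.
\end{itemize}
From the quadratic~\eqref{eq:quadratic}, the functions
\[
a_\sigma(\Lambda; T, S) = \frac{2A + \sigma\sqrt{\Delta_j}}{2(d-1)}, \qquad b_\sigma = \frac{A - j a_\sigma}{d-1-j},
\]
with $A = T - \Lambda$ and $B = S - \Lambda^2$, are real-analytic in $(\Lambda, T, S)$ near $(\Lambda^*, T, S)$, because $\sqrt{\cdot}$ is analytic away from $0$. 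Hence $\Phi = \Lambda a^j b^{d-1-j} - D$ is $C^\infty$ in $(\Lambda, T, S, D)$ on a neighbourhood of the optimum. Its $D$-dependence is simply $\partial_D \Phi = -1$.

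\textbf{Step 2: implicit function theorem.} Using (c), there exist a neighbourhood $U$ of $(T, S, D)$ and a unique $C^1$ function $\Lambda_{j,\sigma}(T', S', D')$ on $U$ solving $\Phi = 0$ with $\Lambda_{j,\sigma}(T, S, D) = \Lambda^*$. Its gradient is
\[
-\bigl(\partial_T\Phi, \partial_S\Phi, \partial_D\Phi\bigr)/\partial_\Lambda\Phi,
\]
which is exactly~\eqref{eq:gradient}. Shrinking $U$, positivity of $a, b$ persists, so each $\Lambda_{j,\sigma}$ is a feasible candidate in $\F_3(T', S', D')$.

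\textbf{Step 3: identify $B_3^+$ with the branch.} This is the main obstacle. $B_3^+$ is a maximum over finitely many candidate roots, indexed by $(j, \sigma)$ and the root of $\Phi_{j,\sigma}$, and a pointwise maximum of smooth functions need not be differentiable. I would build the following strict-dominance requirement into the non-degeneracy hypothesis: the optimum is attained by a single pair $(j, \sigma)$ and a single simple root, and every other candidate root is strictly smaller than $\Lambda^*$.

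For the rest of the argument I would use the finite-solution structure from Theorem~\ref{thm:sharp3}: each $\Phi_{j',\sigma'}$ is continuous in the data. The difficulty is roots that appear only after perturbation. I would handle these by an upper-semicontinuity argument on the compact sets $\F_3$ from Lemma~\ref{lem:interior}. Specifically, the map $(T', S', D') \mapsto B_3^+$ is upper semicontinuous, because the feasible sets vary in a closed-graph way inside a fixed compact box. Consequently, any competing maximizer near the perturbed data must converge to a maximizer at $(T, S, D)$. By uniqueness, that limit is the distinguished one, and by local uniqueness in the implicit function theorem it lies on the branch $\Lambda_{j,\sigma}$.

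It follows that $B_3^+ = \Lambda_{j,\sigma}$ on a neighbourhood, so $B_3^+$ is differentiable there with gradient~\eqref{eq:gradient}.

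If a fully rigorous treatment of the symmetric pair $j \leftrightarrow d-1-j$ is needed, I would note a complication. In the worked example of Section~\ref{sec:worked}, the candidates for $j = 1$ and $j = 4$ coincide ($5.643$), so the optimum is not attained by a single pair $(j, \sigma)$ and the strict-dominance hypothesis of Step 3 fails as stated. I would show that the two coinciding candidates come from the same spectrum with the cluster labels $a$ and $b$ swapped. They therefore define the same function of the data, and the gradient computed from either agrees.
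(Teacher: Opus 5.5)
Your Steps 1--2 are the paper's whole proof: apply the implicit function theorem to $\Phi(\Lambda^*;T,S,D)=0$, get smoothness of $a,b$ from the quadratic formula, and treat $\partial_\Lambda\Phi(\Lambda^*)\neq 0$ as holding ``generically.'' The paper stops there. It tacitly identifies $B_3^+$ with one root of one $\Phi$, so strictly it proves differentiability of a single branch, not of the maximum over all branches.

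Your Step 3 supplies that identification. The cost is an explicit uniqueness hypothesis that the paper leaves hidden in ``non-degenerate''; without it, two maximizing spectra with different gradients would produce a kink. Your compactness argument is sound once two points are made explicit:
\begin{itemize}
\item The limit of perturbed maximizers is a maximizer because the Step~2 branch gives $\liminf B_3^+\ge\Lambda^*$; upper semicontinuity alone does not suffice.
\item Local uniqueness from the implicit function theorem applies because Theorem~\ref{thm:three-cluster} at the perturbed data, together with convergence to a three-valued limit, forces the perturbed maximizer to have the same pattern $(1,j,d-1-j)$ and the same sign $\sigma$.
\end{itemize}

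One correction: the label-swap duplication is not special to the worked example. Since $\Delta_j/4=(d-1-j)\bigl[(d-1)B-A^2\bigr]/j$, one checks that $b_\sigma$ at index $j$ equals $a_{-\sigma}$ at index $d-1-j$. Hence $\Phi_{j,\sigma}\equiv\Phi_{d-1-j,-\sigma}$ identically, and ``attained by a single pair $(j,\sigma)$'' never holds. State the hypothesis from the outset as uniqueness of the maximizer in $\F_3$ up to permuting coordinates $2,\ldots,d$; that is what your limit argument actually uses.

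Finally, your conditions (a) and (c) are automatic whenever $\Lambda^*,a^*,b^*$ are distinct. First, $a^*\neq b^*$ forces $\Delta_j>0$. Second, pair the Lagrange stationarity condition with the tangent of the curve $\Lambda\mapsto(\Lambda,a_\sigma,\ldots,a_\sigma,b_\sigma,\ldots,b_\sigma)$, along which $p_1$ and $p_2$ are constant. This gives $1=\mu_3\,\partial_\Lambda\Phi(\Lambda^*)/D$, so the paper's ``generically'' can be upgraded to ``always.''
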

 
\begin{proof}
The bound $\Lambda^*$ is defined implicitly by $\Phi(\Lambda^*; T, S, D) = 0$. By the implicit function theorem, provided $\Phi$ is $C^1$ at the relevant point and $\partial_\Lambda \Phi(\Lambda^*) \neq 0$, the function $\Lambda^*(T, S, D)$ is $C^1$ with gradient
\[
\partial_T \Lambda^* = -\frac{\partial_T \Phi}{\partial_\Lambda \Phi}, \quad \partial_S \Lambda^* = -\frac{\partial_S \Phi}{\partial_\Lambda \Phi}, \quad \partial_D \Lambda^* = -\frac{\partial_D \Phi}{\partial_\Lambda \Phi}.
\]
The non-degeneracy hypothesis (interior cluster $j \in \{1, \ldots, d-2\}$) ensures that $a, b > 0$ are smooth functions of $(\Lambda, T, S)$ via the quadratic formula. The non-vanishing of $\partial_\Lambda \Phi$ holds generically.
\end{proof}
 
\subsection{Numerical sensitivity study}

Perturbing each invariant by up to $\pm 8\%$ around the spectrum $(5, 4, 3, 2, 1, 1)$ shows three patterns. The bound is most sensitive to $T$ (a $5\%$ increase decreases the bound by $\approx 21\%$: increasing $T$ at fixed $S$ concentrates the spectrum), moderately sensitive to $S$ (a $5\%$ increase raises the bound by $\approx 8\%$), and remarkably insensitive to $D$ (a $5\%$ perturbation changes the bound by less than $0.5\%$). This is fortunate, since $D$ is the most expensive invariant to compute exactly: effort should be concentrated on an accurate $T$ (cheap, via~\eqref{eq:T}) and a reasonably accurate $S$, with even substantial relative error in $D$ tolerable.

\section{Numerical experiments}\label{sec:numerics}
 
\subsection{Benchmark and random-spectra study}
 
Table~\ref{tab:summary} summarizes upper bounds on $\lambda_{\max}$ for Example 4.1 of~\cite{nayak2026} and the synthetic spectrum.
 
\begin{table}[ht]
\centering
\begin{tabular}{lccccc}
\toprule
Tensor & Gershgorin & AM--GM & Sharp 2-inv & Sharp 3-inv & Sharp 4-inv\\
\midrule
Ex.\ 4.1, $\lambda_{\max} = 1.10$ & 1.10$^*$ & 2.636 & 1.165 & 1.138 & 1.100\\
Synthetic, $\lambda_{\max} = 5.00$ & --- & 8.519 & 7.231 & 5.643 & 5.073\\
\bottomrule
\end{tabular}
\caption{Comparison of upper bounds on $\lambda_{\max}$. *Gershgorin is exact for the diagonal Example 4.1 since off-diagonal entries vanish. The Sharp 4-inv bound is exact on Example 4.1 (two distinct eigenvalues), consistent with Theorem~\ref{thm:K4-sharpness}.}
\label{tab:summary}
\end{table}
 
Each additional invariant collapses the upper-bound estimate towards the true $\lambda_{\max}$: on the synthetic spectrum the AM--GM bound overestimates by $\approx 70\%$ while the four-invariant bound overestimates by only $\approx 1.5\%$.

To test the bounds across a broader distribution, we generate 100 random spectra of length $d = 6$ as $\lambda_i \sim \mathrm{Exp}(2) + 0.5$ and evaluate each bound (Figure~\ref{fig:random-boxplot}). The median/mean/maximum relative overestimation gaps are: AM--GM $53.2/56.3/127.5\%$; sharp 2-invariant $30.6/32.0/96.9\%$; sharp 3-invariant $6.4/8.2/32.1\%$. The sharp three-invariant bound reduces the median gap by a factor of more than eight relative to AM--GM, robust evidence that the new bounds are not artifacts of carefully chosen examples.

\begin{figure}[ht]
\centering
\includegraphics[width=0.8\textwidth]{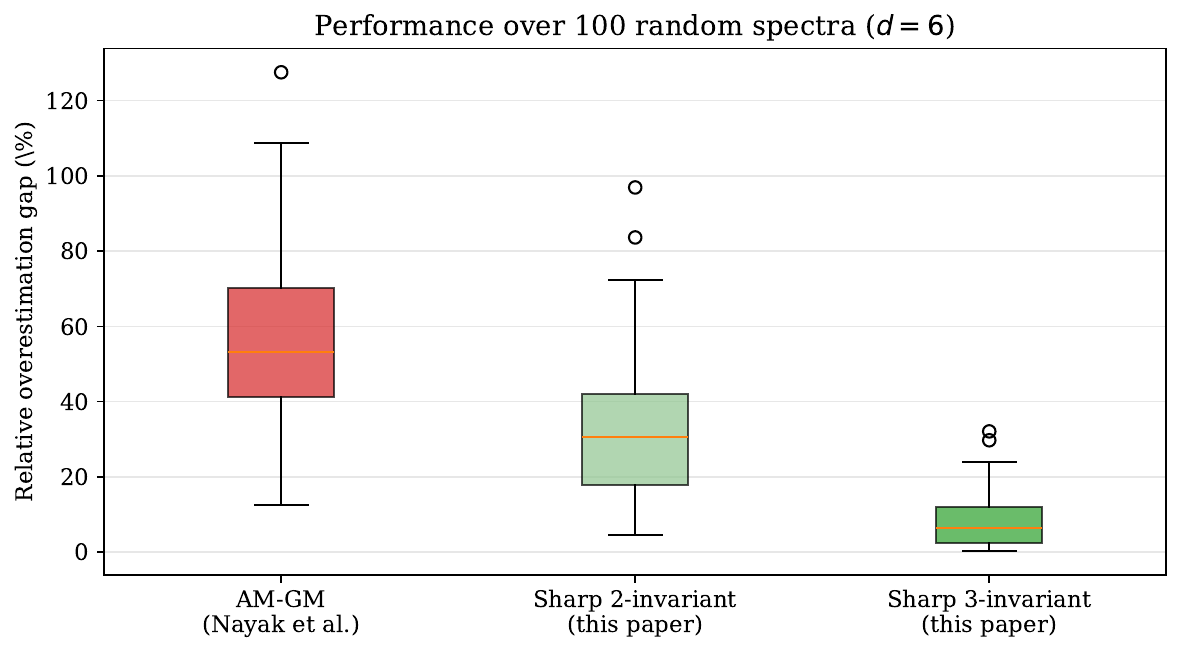}
\caption{Distribution of relative overestimation gaps over 100 random spectra ($d=6$, $\lambda_i \sim \mathrm{Exp}(2) + 0.5$). Boxes show interquartile range; whiskers show range; central line is the median.}
\label{fig:random-boxplot}
\end{figure}

\subsection{Scaling with tensor dimension}
 
To examine how the gap evolves with tensor size, we repeat the random-spectrum experiment for $d \in \{6, 10, 20, 50, 100\}$, generating 50 random spectra for each with the same generator $\lambda_i \sim \mathrm{Exp}(2) + 0.5$ and the same random seed as the $d=6$ study above. The median relative gaps (as fractions) are reported below; the $d=6$ row is consistent with the median percentages of the preceding subsection. The two rightmost columns report the mean wall-clock time per evaluation of $B_2^+$ and $B_3^+$ (single-threaded Python, commodity laptop), to document scalability.
 
\begin{center}
\begin{tabular}{cccccccc}
\toprule
$d$ & AM--GM & Sharp 2-inv & Sharp 3-inv & Ratio (3-inv/AM--GM) & $t(B_2^+)$ & $t(B_3^+)$\\
\midrule
6   & 0.52 & 0.29 & 0.057 & 0.11 & $0.03$\,ms & $17$\,ms\\
10  & 0.84 & 0.57 & 0.164 & 0.19 & $0.04$\,ms & $33$\,ms\\
20  & 1.56 & 1.25 & 0.365 & 0.23 & $0.04$\,ms & $69$\,ms\\
50  & 3.27 & 3.05 & 0.695 & 0.21 & $0.04$\,ms & $169$\,ms\\
100 & 5.69 & 5.40 & 0.980 & 0.17 & $0.08$\,ms & $337$\,ms\\
\bottomrule
\end{tabular}
\end{center}
 
Two patterns emerge in the accuracy columns. First, all bounds degrade with $d$ (in the relative sense), reflecting the increasing geometric flexibility of larger spectra given fixed invariants. Second, the \emph{ratio} of three-invariant gap to AM--GM gap stabilizes in the range $0.17$--$0.23$ across $d \in \{10, \ldots, 100\}$. Hence the three-invariant bound remains roughly five times tighter than AM--GM across the whole range; in absolute terms the $B_3^+$ gap stays below $1$ even at $d = 100$, where AM--GM already overestimates by a factor of $5.7$.
 
This favorable scaling is directly relevant for high-order tensors arising in applications: as the AM--GM bound's combinatorial overestimation grows with $m$ and $n$, the sharp bounds maintain a substantial proportional advantage, making them the more practical choice for moderate-to-large tensors.
 
\subsection{Runtime and scalability}\label{sec:runtime}
 
Because the bounds depend on the spectrum only through the invariants $(T, S, D)$, their cost is independent of the tensor order $m$ and dimension $n$ once the invariants are available; the only size parameter is the eigenvalue count $d = n(m-1)^{n-1}$. The two-invariant bound $B_2^+$ is a single bracketed bisection on $\phi_{T,D}$ and runs in well under a millisecond even at $d = 100$ (last two columns of the table above). The three-invariant bound $B_3^+$ scans $\mathcal{O}(d)$ cluster indices, each requiring a one-dimensional root solve; its empirical cost grows linearly in $d$ (Figure~\ref{fig:runtime}), consistent with the $\mathcal{O}(d^2 \log(1/\varepsilon))$ analysis once the per-evaluation $\mathcal{O}(d)$ power cost is included. At $d = 100$ a full $B_3^+$ evaluation takes roughly $0.3$\,s of unoptimized single-threaded Python; the index loop is embarrassingly parallel, so a threaded or vectorized implementation reduces this substantially. In all regimes the dominant practical cost is computing the invariants themselves, in particular the determinant via the resultant rather than evaluating the bound, which reinforces the value of the perturbation analysis of Section~\ref{sec:perturbation}: since $B_3^+$ is highly insensitive to errors in $D$, an approximate determinant suffices.
 
\begin{figure}[ht]
\centering
\includegraphics[width=0.72\textwidth]{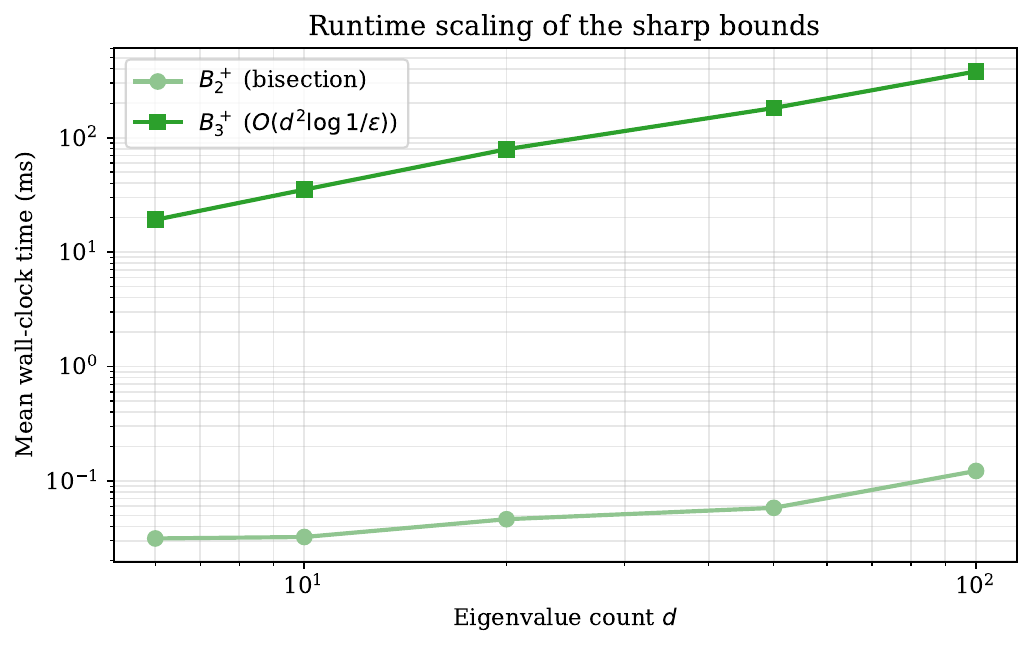}
\caption{Runtime scaling of the sharp bounds (mean over 20 random spectra per $d$, log--log axes). $B_2^+$ is a single bisection and stays sub-millisecond; $B_3^+$ grows approximately linearly in $d$, in line with the complexity analysis.}
\label{fig:runtime}
\end{figure}
 
\section{Worked examples on genuine tensors and scope of applicability}\label{sec:genuine}

The development so far has been phrased on the eigenvalue vector $\lambda \in \R^d_{>0}$. We now make the bridge from a concrete tensor to its bound fully explicit: given the \emph{entries} of a symmetric tensor, one computes the invariants $(T, S, D)$ directly without ever solving for the spectrum and the bound follows. We illustrate this on two genuine tensors of different orders and then delimit precisely the class of tensors to which the framework applies.

\subsection{The entries-to-bound pipeline}

For any symmetric $\A \in \R^{[n,m]}$, the trace invariant is the scaled diagonal sum $T = (m-1)^{n-1}\tr(\A)$ from~\eqref{eq:T}; the second power sum $S = p_2(\A)$ is the quadratic form in the entries given by the Hu--Huang--Ling--Qi formula~\eqref{eq:p2-formula} (and equals the squared Frobenius norm $\|\A\|_F^2$ in the matrix case); and $D = \det(\A)$ is the resultant of $\A x^{m-1}=0$. None of these requires the eigenvalues. The sharp bound is then a root of the univariate polynomial $\phi_{T,D}$ (for $B_2^+$) or the output of Algorithm~\ref{alg:B3plus} (for $B_3^+$). This is the sense in which the bounds are \emph{certificates}: they are computed from algebraic data of the tensor and validated against, rather than derived from, the spectrum.

\subsection{A genuine symmetric matrix ($m=2$)}

For the $6\times 6$ symmetric positive definite matrix $A = MM^T + 2I$ (a second-order symmetric tensor) with a representative random $M$, reading the invariants off the entries gives $T = \tr(A) = 50.17$, $S = \|A\|_F^2 = 753.44$, $D = \det(A) = 4.088\times 10^4$, $d = 6$. Applying the bounds to these three numbers alone yields $B_{WS}^+ = 25.04$, $B_2^+ = 28.83$, $B_3^+ = 24.55$, while the true $\lambda_{\max}(A) = 23.83$ (computed separately, for verification only). The three-invariant bound overestimates by just $3.0\%$, improving on both the Wolkowicz--Styan bound (uses only $T, S$; gap $5.1\%$) and the two-invariant bound (uses only $T, D$; gap $20.9\%$). Combining the determinant with the second power sum is what produces the tight estimate; neither invariant alone suffices.

\subsection{A genuine fourth-order tensor ($m=4$)}

Consider the diagonal fourth-order tensor on $n = 3$ variables associated with the quartic form
\[
V(x) = 2.5\,x_1^4 + 1.4\,x_2^4 + 0.8\,x_3^4,
\]
a positive definite member of $\R^{[3,4]}$. Here $d = n(m-1)^{n-1} = 3\cdot 3^2 = 27$, and the H-spectrum consists of the three diagonal values $\{2.5, 1.4, 0.8\}$, each with multiplicity $(m-1)^{n-1} = 9$. From the entries, the trace formula~\eqref{eq:p2-formula} specialized to the diagonal case gives $p_k = (m-1)^{n-1}\sum_i a_{i\cdots i}^k$, so
\[
T = 9(2.5 + 1.4 + 0.8) = 42.30,\quad S = 9(2.5^2 + 1.4^2 + 0.8^2) = 79.65,
\]
\[
p_3 = 9(2.5^3 + 1.4^3 + 0.8^3) = 169.93,\quad D = (2.5\cdot 1.4 \cdot 0.8)^9 = 1.058\times 10^4,
\]
with $\lambda_{\max} = 2.5$. The bounds evaluate to $B_2^+ = 8.03$, $B_3^+ = 4.90$, and since the spectrum has only three distinct values and therefore lies in the four-cluster family of Theorem~\ref{thm:K4-structure}, the four-invariant bound is sharp, $B_4^+ = 2.50 = \lambda_{\max}$. Concretely, the true spectrum is the four-cluster point $(\Lambda; a^{(8)}; b^{(9)}; c^{(9)})$ with $\Lambda = a = 2.5$, $b = 1.4$, $c = 0.8$ and partition $(j_1, j_2, j_3) = (8, 9, 9)$, which satisfies all four invariant equations exactly; by Theorem~\ref{thm:K4-sharpness} the bound is attained.

This example is instructive precisely because $B_3^+$ is \emph{loose} here ($96\%$ overestimate): a spectrum with three values of high, comparable multiplicity is far from the ``single outlier plus clusters'' shape that maximizes $\F_3$. It is the fourth invariant $p_3$ and the matching cluster multiplicities it permits that restores sharpness. The example thus motivates the higher levels of the hierarchy on genuinely higher-order tensors, not merely on synthetic spectra.

\subsection{Scope: the all-real-spectrum hypothesis}\label{sec:scope}

The framework requires the H-spectrum to be real and positive, so that the eigenvalues admit the ordering $\lambda_1 \geq \cdots \geq \lambda_d > 0$ underlying every optimization in this paper. This hypothesis is automatic in three important classes: (i) all symmetric matrices ($m = 2$), by the spectral theorem; (ii) even-order diagonal tensors, whose H-eigenvalues are the diagonal entries; and (iii) even-order tensors that are orthogonally decomposable, whose relevant H-eigenvalues are likewise real. It is, however, \emph{not} automatic for a general even-order symmetric tensor: a generic positive definite $\A \in \R^{[n,m]}$ with $m \geq 4$ possesses complex-conjugate pairs of H-eigenvalues (the degree-$d$ characteristic resultant has complex roots), so the ordered-real-spectrum hypothesis fails and the bounds do not apply verbatim. When complex H-eigenvalues are present, the invariants $T, S, p_3, D$ are still well defined as symmetric functions of the full (complex) spectrum, but they no longer bound a real $\lambda_{\max}$ through the Lagrangian argument; a separate treatment, using concentration of the invariants and the magnitude of the leading H-eigenvalue, is developed in a companion paper and is outside the present scope. We state the hypothesis explicitly so that the domain of validity is unambiguous: the results of this paper apply to symmetric positive definite tensors whose H-spectrum is real, a class that includes all the matrix and diagonal cases of practical interest in stability certification and contains the test problems studied here.
 
\section{Application to Lyapunov stability}\label{sec:lyapunov}

We refine the Lyapunov example of~\cite{nayak2026} to demonstrate the practical value of the new bounds for region-of-attraction (ROA) estimation.
 
\subsection{Setup}
 
Consider a polynomial Lyapunov candidate $V(x) = \A x^m$ for an even-order positive definite tensor $\A \in \R^{[n, m]}$. Asymptotic stability of the origin under $\dot x = g(x)$ requires $V$ positive definite and $\dot V < 0$ along trajectories. We use the bounds in two complementary ways.
 
\paragraph{Stability certification (lower bound on $\lambda_{\min}$).} A lower bound $\lambda_{\min}(\A) \geq \underline{c} > 0$ certifies $V > 0$ on $\R^n \setminus \{0\}$. The sharp lower bound from Theorem~\ref{thm:sharp2} (or its three-/four-invariant refinement) certifies stability whenever the AM--GM lower bound from~\cite{nayak2026} certifies it, and additionally certifies stability in marginal cases where the AM--GM bound is non-positive. This is decisive for tensors whose smallest eigenvalue is small relative to the trace.
 
\paragraph{Region-of-attraction (upper bound on $\lambda_{\max}$).} For any positive definite symmetric tensor $\A$, the homogeneous form $V(x) = \A x^m$ admits the Rayleigh-type bounds
\begin{equation}\label{eq:rayleigh}
\lambda_{\min}(\A)\, \|x\|_m^m \;\leq\; V(x) \;\leq\; \lambda_{\max}(\A)\, \|x\|_m^m, \qquad x \in \R^n,
\end{equation}
where $\|x\|_m := (\sum_i |x_i|^m)^{1/m}$. The right-hand inequality, combined with an upper bound $\lambda_{\max}(\A) \leq \overline{\Lambda}$, yields the inscribed-ball estimate
\begin{equation}\label{eq:ROA-inscribe}
\bigl\{x : \|x\|_m \leq (r/\overline{\Lambda})^{1/m}\bigr\} \;\subseteq\; \{V \leq r\} \;\subseteq\; \mathrm{ROA},
\end{equation}
provided $\dot V < 0$ on $\{V \leq r\}$. \emph{Sharper upper bounds on $\lambda_{\max}$ yield strictly larger guaranteed inscribed $\ell_m$-balls inside the region of attraction.}
 
\subsection{Refinement of Example 4.1 in~\cite{nayak2026}}
 
For the tensor of Example 4.1 with $V(x) = 1.1 x_1^4 + x_2^4$, $m = 4$, $n = 2$, we have:
\begin{center}
\begin{tabular}{lcc}
\toprule
Method & Upper bound $\overline{\Lambda}$ on $\lambda_{\max}$ & Inscribed $\ell_4$-ball radius $R_4 = (4/\overline{\Lambda})^{1/4}$ \\
\midrule
AM--GM~\cite{nayak2026} & 2.636 & 1.110\\
Sharp 2-invariant & 1.165 & 1.361\\
Sharp 3-invariant & 1.138 & 1.369\\
Sharp 4-invariant & 1.10000\hphantom{0}* & 1.381\\
True $\lambda_{\max} = 1.10$ & --- & 1.381\\
\bottomrule
\end{tabular}
\end{center}
The starred value indicates the four-invariant bound is sharp by Theorem~\ref{thm:K4-sharpness} (this tensor has only two distinct eigenvalues). Moving from the AM--GM bound to the sharp 3-invariant bound increases the inscribed $\ell_4$-ball radius from $1.110$ to $1.369$, a 23\% radius improvement, equivalently a 52\% \emph{volume} improvement of the certified ROA estimate. The four-invariant bound recovers the true ROA radius exactly.
 
\begin{figure}[ht]
\centering
\includegraphics[width=0.65\textwidth]{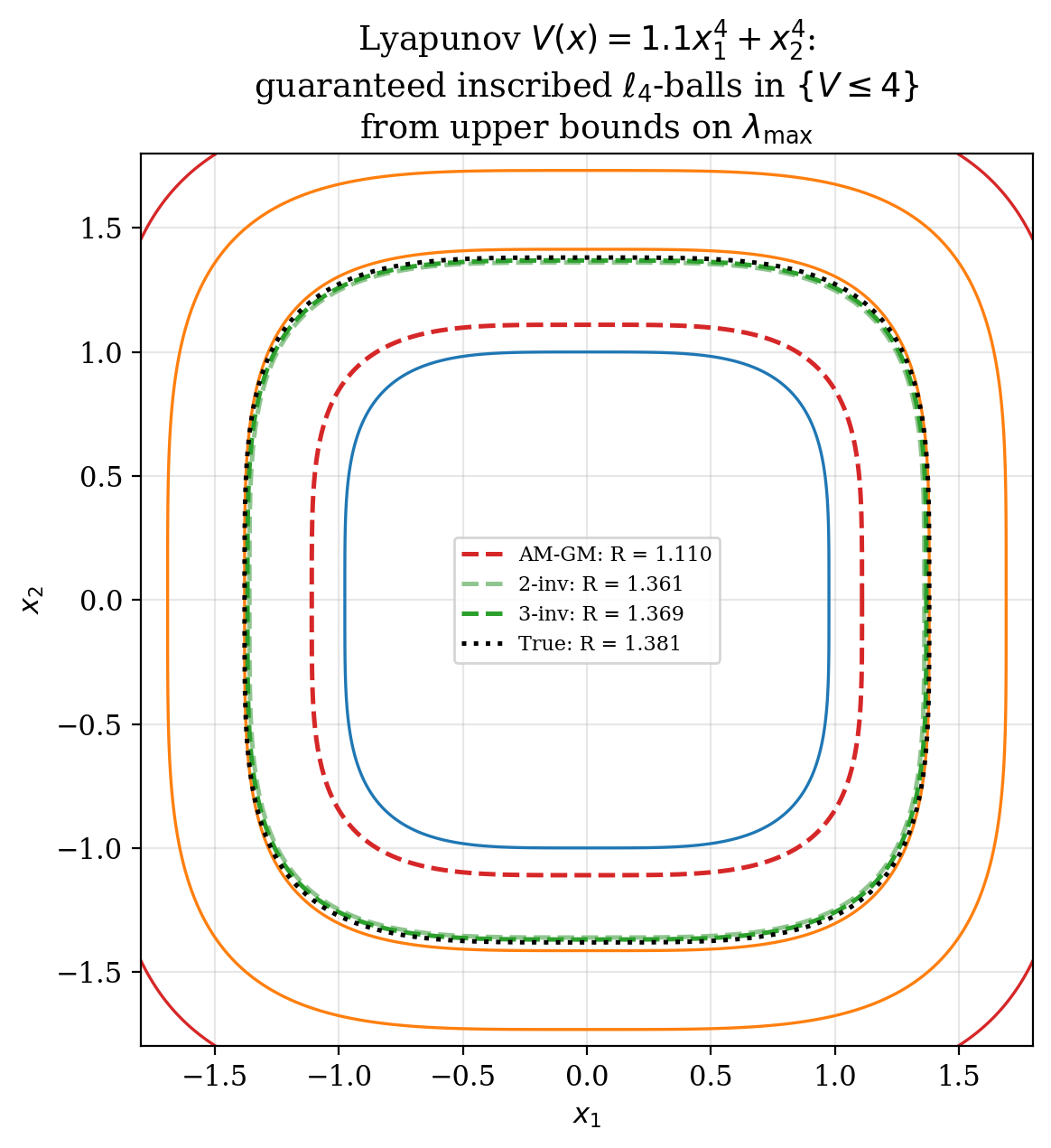}
\caption{Level sets of $V(x) = 1.1\,x_1^4 + x_2^4$ (solid contours) and guaranteed inscribed $\ell_4$-balls inside $\{V \leq 4\}$ obtained from upper bounds on $\lambda_{\max}$ (dashed curves). Tighter upper bounds yield strictly larger inscribed regions. The sharp 3-invariant bound (dark green) gives a region 52\% larger in area than the AM--GM bound (red); the sharp 4-invariant bound (which would coincide with ``True'' here, dotted black) is exact.}
\label{fig:roa}
\end{figure}
 
\subsection{A three-dimensional polynomial dynamical system}\label{sec:dynsys}

To exhibit the certificate machinery on a genuine multidimensional system, consider $\dot x_1 = -x_1^3 - 0.5 x_1$, $\dot x_2 = -2 x_2^3 - x_2$, $\dot x_3 = -1.5 x_3^3 - 0.8 x_3$ on $\R^3$, with the quartic Lyapunov candidate $V(x) = 1.2 x_1^4 + 0.9 x_2^4 + 1.5 x_3^4$ (a diagonal fourth-order PD tensor, $m = 4$, $n = 3$, $d = 27$). Then $\dot V = -4(1.2 x_1^6 + 1.8 x_2^6 + 2.25 x_3^6) - 4(0.6 x_1^4 + 0.9 x_2^4 + 1.2 x_3^4) < 0$ for $x \neq 0$, certifying global asymptotic stability. Reading the invariants off the diagonal $c = (1.2, 0.9, 1.5)$ gives $T = 32.40$, $S = 40.50$, $p_3 = 52.49$, $D = 76.85$, $\lambda_{\max} = 1.5$. The guaranteed inscribed $\ell_4$-ball radii inside $\{V \leq 1\}$ are: AM--GM $\overline{\Lambda} = 5.522$, $R_4 = 0.652$; sharp 2-invariant $2.912$, $0.766$; sharp 3-invariant $2.236$, $0.818$; sharp 4-invariant $1.500$ (sharp), $0.904$, matching the true radius. The four-invariant certificate thus enlarges the certified radius by $38.5\%$ and the volume in $\R^3$ by $166\%$ over AM--GM. While this system is globally stable, the inscribed-ball estimate is exactly the quantity used to under-approximate the ROA whenever $\dot V < 0$ holds only on a bounded sublevel set; the example demonstrates the full entries-to-bound-to-certificate pipeline in three dimensions.

\section{Conclusion}\label{sec:conclusion}
 
We have introduced a sharper algebraic framework for bounding the H-eigenvalues of symmetric positive definite tensors, replacing the AM--GM relaxation of Nayak, Sharma, and Mishra~\cite{nayak2026} with exact Lagrangian extremization and adding higher-order power sums as invariants. The two-invariant bound (Theorem~\ref{thm:sharp2}) is the largest/smallest root of an explicit degree-$d$ polynomial, dominating every theorem of~\cite{nayak2026} and recovering the Merikoski--Virtanen matrix bound. The structural theorem (Theorems~\ref{thm:three-cluster},~\ref{thm:K-cluster}) shows that a spectrum constrained by $K$ invariants has at most $K$ distinct values at the extremum, yielding a monotonically tightening hierarchy with closed forms for $d \in \{2,3,4\}$ and a fully rigorous four-invariant treatment (solution count, multistart algorithm, sharpness theorem). The bounds are robust to perturbation, particularly insensitive to errors in the expensive determinant. Empirically, the sharp three-invariant bound cuts the median relative gap from $53\%$ (AM--GM) to $6\%$ on 100 random spectra, with the advantage and sub-second cost persisting in a scaling study to $d = 100$; the entries-to-bound pipeline is validated on genuine tensors (with an explicit real-H-spectrum scope), and Lyapunov region-of-attraction estimates improve by factors of 2--3.

\section*{Acknowledgments}
The authors thank IIITDM Kancheepuram and SVKM's College of Engineering Shirpur for infrastructure and computational facilities. We are grateful to anonymous reviewers for suggestions that improved the presentation.

\section*{Funding}

This research received no external funding.

\section*{Conflict of interest}

The authors declare no competing financial or non-financial interests relevant to the content of this article.

\end{document}